\newcolumntype{C}[1]{>{\centering\arraybackslash}p{#1}}
\definecolor{navy}{HTML}{2F729C}
\definecolor{red1}{HTML}{FF0000} % this is red
\newtheorem{theorem}{Theorem}[section]
\newtheorem{lemma}[theorem]{Lemma}
\newtheorem{proposition}[theorem]{Proposition}
\newtheorem{corollary}[theorem]{Corollary}
\theoremstyle{definition}
\newtheorem{conjecture}[theorem]{Conjecture}
\newtheorem{mainthm}{Theorem}
\numberwithin{equation}{section}
\title{On $abc$ triples of the form $(  1,c-1,c)  $}
\author{Elise Alvarez-Salazar}
\address{Department of Mathematics, University of California, Santa Barbara, CA 93106 USA}
\email{ealvarez-salazar@ucsb.edu}
\author{Alexander J. Barrios}
\address{Department of Mathematics, University of St. Thomas, St. Paul, MN 55105 USA}
\email{abarrios@stthomas.edu}
\author{Calvin Henaku}
\address{Department of Mathematics, University of Michigan, Ann Arbor, MI, USA}
\email{chenaku@umich.edu}
\author{Summer Soller}
\address{Department of Mathematics, Colorado State University, Fort Collins, CO 80523 USA}
\email{summer.soller@colostate.edu}
\subjclass{Primary 11D75, 11J25}
\keywords{$abc$ conjecture, $abc$ triples, number theory}
\begin{document}
\begin{abstract}
By an $abc$ triple, we mean a triple $(a,b,c)$ of relatively prime positive integers $a,b,$ and $c$ such that $a+b=c$ and $\operatorname{rad}(abc)<c$, where $\operatorname{rad}(n)$ denotes the product of the distinct prime factors of $n$. The study of $abc$ triples is motivated by the $abc$ conjecture, which states that for each $\epsilon>0$, there are finitely many $abc$ triples $(a,b,c)$ such that $\operatorname{rad}(abc)^{1+\epsilon}<c$. The necessity of the $\epsilon$ in the $abc$ conjecture is demonstrated by the existence of infinitely many $abc$ triples. For instance, $\left(  1,9^{k}-1,9^{k}\right)  $ is an $abc$ triple for each positive integer $k$. In this article, we study $abc$ triples of the form $\left(1,c-1,c\right)  $ and deduce two general results that allow us to recover existing sequences of $abc$ triples having $a=1$ that are in the literature.
\end{abstract}

\maketitle
%\setcounter{tocdepth}{1}
%\tableofcontents

\section{Introduction}

In $1985$, Masser and Oesterl\'{e} proposed the $abc$ conjecture
\cite{MR992208, MR3731300}, which states:

\begin{conjecture}[The $abc$ conjecture]For every $\epsilon>0$, there are finitely many relatively
prime positive integers $a,b,$ and $c$ with $a+b=c$ such that%
\[
\operatorname{rad}(abc)^{1+\epsilon}<c,
\]
where $\operatorname{rad}(n)$ denotes the product of the distinct prime
factors of a positive integer~$n$.
\end{conjecture}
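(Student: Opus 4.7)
The statement displayed is the $abc$ conjecture of Masser and Oesterl\'e itself, which is a well-known open problem; no uncontested complete proof appears in the literature. What follows is therefore a sketch of the directions I would pursue in a serious attempt, together with an honest assessment of where each one stalls.

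The first line of attack I would try is the reduction to Szpiro's conjecture via the Frey--Hellegouarch curve. To a coprime triple with $a+b=c$ I would attach the elliptic curve $E\colon y^{2}=x(x-a)(x+b)$, whose minimal discriminant is essentially $(abc)^{2}$ and whose conductor is essentially $\operatorname{rad}(abc)$, up to powers of $2$ that can be absorbed into $\epsilon$. A uniform Szpiro-type bound $|\Delta_{\min}|\ll_{\delta} N_E^{6+\delta}$ for every $\delta>0$ would then translate directly into $c\ll_{\epsilon}\operatorname{rad}(abc)^{1+\epsilon}$, giving the desired finiteness. The equivalence between the two formulations is classical, so the real task is to prove Szpiro, and the translation step is essentially bookkeeping on minimal Weierstrass models and Tate's algorithm.

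A second, more hands-on line I would pursue is the method of linear forms in logarithms. Writing $c=a+b$ with $a,b,c$ built from a fixed finite set $S$ of primes containing those dividing $\operatorname{rad}(abc)$, Baker's theorem and its $p$-adic refinements provide lower bounds on $|a/b-(-1)|_v$ in terms of the heights of $a$ and $b$. Feeding these into $a+b=c$ yields the best unconditional bounds currently known, of the shape $c<\exp\bigl(\kappa\,\operatorname{rad}(abc)^{1/3+\epsilon}\bigr)$ due to Stewart and Yu. This already implies finiteness for each \emph{fixed} $S$, but pushing the exponential bound down to a polynomial bound uniformly in $S$ is exactly where every elementary attack stalls.

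The main obstacle, common to both routes, is that no known technique extracts strong multiplicative control over the primes dividing $c$ from the purely additive relation $a+b=c$: linear forms in logarithms give exponential savings rather than polynomial ones, and Szpiro's inequality is no easier than $abc$ itself. Short of invoking Mochizuki's inter-universal Teichm\"uller machinery, whose status remains disputed, I would not expect any of these approaches to yield a complete proof, and what I would realistically aim for in a self-contained treatment is either a sharpened Stewart--Yu bound or a finiteness statement conditional on Szpiro.
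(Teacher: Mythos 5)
You have correctly identified that this statement is the $abc$ conjecture itself, which the paper states as a conjecture and does not prove --- there is no proof in the paper to compare against, and declining to supply one is the right call. Your survey of the standard lines of attack (the Frey--Hellegouarch reduction to Szpiro's conjecture, and the Stewart--Yu bounds via linear forms in logarithms) is accurate as far as it goes, and your assessment of where each stalls is honest; nothing further is required here.
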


Due to its profound implications, this simple-to-state conjecture is one of
the most important open questions in number theory. For instance, some
consequences of the $abc$ conjecture include an asymptotic version of Fermat's
Last Theorem, Faltings's Theorem, Roth's Theorem, and Szpiro's Conjecture
\cite{MR1141316, MR1005184, MR992208}. For further information on the $abc$ conjecture, see the excellent survey article \cite{MR3584566}.

The statement of the $abc$ conjecture naturally leads us to ask if the
$\epsilon$ is necessary. This leads us to the \textquotedblleft simplistic
$abc$ conjecture,\textquotedblright\ which asks if there are finitely many
relatively prime positive integers $a,b,$ and $c$ with $a+b=c$ for which
$\operatorname{rad}\!\left(  abc\right)  <c$. We call such triples,  {\em abc triples}. The \textquotedblleft simplistic $abc$
conjecture\textquotedblright\ is false, as demonstrated by the triple $\left(
1,3^{2^{k}}-1,3^{2^{k}}\right)  $, which is an $abc$ triple for each positive
integer $k$. This infinite sequence of $abc$ triples is one of the first
documented counterexamples to the simplistic $abc$ conjecture and was
communicated to Lang \cite{MR1005184} by Jastrzebowski and Spielman. A theorem
of Stewart \cite{MR781193} leads to similar sequences of $abc$
triples such as $\left(  1,8^{7^{k}}-1,8^{7^{k}}\right)  $, where $k$ is a
positive integer \cite{MR3584566}. Jastrzebowski and Spielman's counterexample
can also be recovered from the following result: for each odd prime $p$
and each positive integer $k$, $\left(  1,p^{(p-1)k}-1,p^{(p-1)k}\right)  $ is
an $abc$ triple \cite{MR4474850}. Another construction, due to Granville and
Tucker \cite{MR1930670}, shows that for each odd prime $p$, $\left(
1,2^{p\left(  p-1\right)  }-1,2^{p\left(  p-1\right)  }\right)  $ is an $abc$ triple.

In this article, we prove that $(1,c-1,c)$ is an $abc$ triple if and only if $\operatorname{cosocle}(c-~1)>\operatorname{rad}(c)$, where $\operatorname{cosocle}(m)=\frac{m}{\operatorname{rad}(m)}$ for $m$ a positive integer (see Proposition~\ref{abcprop}). We note that the term cosocle is borrowed from module theory, where the \textit{cosocle} of an $R$-module $M$ is the maximal semisimple quotient of $M$, or equivalently, $\frac{M}{\operatorname{rad}(M)}$. In our setting, the cosocle plays a crucial role in our results, from which we recover each of the above mentioned sequences of $abc$ triples. To provide context for our work, we note that the equivalence above requires us to compute $\operatorname{cosocle}(c-1)$ in order to deduce whether $(1,c-1,c)$ is an $abc$ triple. The computation of $\operatorname{cosocle}(c-1)$ requires knowledge of the prime factorization of $c-1$, which becomes computationally difficult as c gets large. Our main results provide a recipe for constructing infinitely many $abc$ triples of the form $(1,c-1,c)$ based on knowledge of a divisor of $c-1$ or $c$. Our first theorem illustrates this.

\begin{mainthm}
\label{mainthm1} \textit{Let $c$ and $m$ be positive integers with $c>1$. If $m$ divides $c-1$ and $\operatorname{cosocle}(m)>\operatorname{rad}(c)$, then $\left(  1,c^{k}-1,c^{k}\right)  $ is an $abc$ triple for each positive
integer $k$.}.
\end{mainthm}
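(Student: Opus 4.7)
The plan is to reduce the claim to Proposition~\ref{abcprop} applied to the triple $(1, c^k-1, c^k)$. By that proposition, it suffices to establish the inequality
\[
\operatorname{cosocle}(c^k-1) > \operatorname{rad}(c^k).
\]
The right-hand side simplifies immediately since $c$ and $c^k$ have the same prime divisors, so $\operatorname{rad}(c^k) = \operatorname{rad}(c)$. Thus the goal reduces to proving $\operatorname{cosocle}(c^k-1) > \operatorname{rad}(c)$, and this should follow by leveraging the hypothesis $\operatorname{cosocle}(m) > \operatorname{rad}(c)$ together with the divisibility chain $m \mid c-1 \mid c^k - 1$.

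The first key step is the standard divisibility $c - 1 \mid c^k - 1$ (via the factorization $c^k - 1 = (c-1)(c^{k-1} + \cdots + 1)$), which combined with $m \mid c-1$ gives $m \mid c^k - 1$. The second, and conceptually central, step is a short monotonicity lemma I would record separately: if $m$ and $n$ are positive integers with $m \mid n$, then $\operatorname{cosocle}(m) \mid \operatorname{cosocle}(n)$, and in particular $\operatorname{cosocle}(m) \leq \operatorname{cosocle}(n)$. This is a $p$-adic valuation check: for every prime $p \mid m$, one has $p \mid n$, so
\[
v_p\!\left(\operatorname{cosocle}(m)\right) = v_p(m) - 1 \leq v_p(n) - 1 = v_p\!\left(\operatorname{cosocle}(n)\right),
\]
and for primes $p \nmid m$ the valuation on the left is zero, so the divisibility holds trivially.

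Combining these pieces closes the argument: the monotonicity lemma applied to $m \mid c^k - 1$ yields $\operatorname{cosocle}(m) \leq \operatorname{cosocle}(c^k - 1)$, and then the hypothesis $\operatorname{cosocle}(m) > \operatorname{rad}(c) = \operatorname{rad}(c^k)$ gives
\[
\operatorname{cosocle}(c^k - 1) \geq \operatorname{cosocle}(m) > \operatorname{rad}(c^k),
\]
after which Proposition~\ref{abcprop} produces the $abc$ triple $(1, c^k-1, c^k)$ for every positive integer $k$. The conditions that $c^k > 1$ and that the three entries are pairwise coprime are immediate from $c > 1$.

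I do not anticipate a serious obstacle here; the argument is essentially a two-line deduction once one isolates the monotonicity lemma for $\operatorname{cosocle}$. The main subtlety, and the only place where care is needed, is the lemma itself: a naive attempt to compare $\operatorname{rad}(m)$ and $\operatorname{rad}(n)$ directly would go through $\operatorname{rad}(m) \mid \operatorname{rad}(n)$ but could mislead one into comparing quotients the wrong way. Framing the statement as a prime-by-prime comparison of valuations — rather than as a ratio inequality — avoids this pitfall and keeps the proof transparent.
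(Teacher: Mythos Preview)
Your proof is correct. The core content matches the paper's argument, but the packaging differs in two small ways. First, where the paper invokes Lemma~\ref{Lemmaonrad} (if $m\mid n$ then $\operatorname{rad}(n)=\operatorname{rad}(n/\operatorname{cosocle}(m))$), you instead isolate the equivalent statement that $m\mid n$ implies $\operatorname{cosocle}(m)\mid\operatorname{cosocle}(n)$; these are two formulations of the same prime-by-prime valuation check, and each yields the needed inequality $\operatorname{cosocle}(m)\leq\operatorname{cosocle}(c^k-1)$. Second, the paper finishes by directly bounding $\operatorname{rad}(c^k(c^k-1))<c^k$, while you route the conclusion through Proposition~\ref{abcprop}. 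Your route is arguably a touch cleaner since it reuses the equivalence already established in Proposition~\ref{abcprop} rather than reproving the radical inequality by hand; the paper's route has the minor advantage of being self-contained at the level of the definition of an $abc$ triple. Either way, the substance is the same.
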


We prove Theorem~\ref{mainthm1} in Section~\ref{section2}. While the proof is
elementary, the result allows us to recover each of the previously mentioned
sequences of $abc$ triples. It also leads to new sequences of $abc$ triples,
such as $\left(  1,n^{\left(  n-1\right)  k}-1,n^{\left(  n-1\right)
k}\right)  $ which is an $abc$ triple for each positive integer $k$ whenever~$n$ is a positive integer that is either odd or even and non-squarefree (see
Corollary \ref{maincor1}). A slight modification of the proof of Theorem
\ref{mainthm1} leads us to our next result (which is also proven in Section~\ref{section2}).

\begin{mainthm}\label{mainthm2}
\textit{Let $b$ and $m$ be positive integers. If $m$ divides $b+1$ and $\operatorname{cosocle}(m)>\operatorname{rad}(b)$, then $\left(  1,b^{k},b^{k}+1\right)  $ is an $abc$ triple for each positive odd integer $k$.}
\end{mainthm}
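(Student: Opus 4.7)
My plan is to apply the equivalence in Proposition~\ref{abcprop} to $c = b^k+1$, reducing the claim to the single inequality
\[
\operatorname{cosocle}(b^k) > \operatorname{rad}(b^k+1).
\]
Since $\operatorname{rad}(b^k) = \operatorname{rad}(b)$, the left side equals $b^k/\operatorname{rad}(b)$, so the task reduces further to producing a good upper bound on $\operatorname{rad}(b^k+1)$.

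The role of the hypothesis that $k$ is odd is to promote the divisibility $m \mid b+1$ to the divisibility $m \mid b^k+1$: this uses the standard factorization $x^k + 1 = (x+1)(x^{k-1} - x^{k-2} + \cdots + 1)$, valid precisely when $k$ is odd, which gives $b+1 \mid b^k+1$ and hence $m \mid b^k+1$. Writing $b^k+1 = m\ell$ for some positive integer $\ell$, I would bound
\[
\operatorname{rad}(b^k+1) \leq \operatorname{rad}(m)\cdot\operatorname{rad}(\ell) \leq \operatorname{rad}(m)\cdot\ell = \frac{b^k+1}{\operatorname{cosocle}(m)},
\]
using submultiplicativity of $\operatorname{rad}$ together with the trivial estimate $\operatorname{rad}(\ell) \leq \ell$. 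Multiplying by $\operatorname{rad}(b)$ and invoking the hypothesis $\operatorname{cosocle}(m) > \operatorname{rad}(b)$ closes the argument.

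The overall structure closely mirrors the proof of Theorem~\ref{mainthm1}, with $b^k+1$ playing the role that $c^k-1$ played there. I expect no real obstacle: the crux is the submultiplicativity estimate applied to the factorization $b^k+1 = m\ell$, and the only genuinely new ingredient relative to Theorem~\ref{mainthm1} is the use of the parity of $k$, which is exactly what is needed to transfer the divisibility from $b+1$ to $b^k+1$.
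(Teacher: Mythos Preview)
Your proposal is correct and essentially identical to the paper's proof: both use the odd-$k$ factorization to obtain $m \mid b^k+1$, then bound $\operatorname{rad}(b^k+1) \leq (b^k+1)/\operatorname{cosocle}(m)$ (the paper via Lemma~\ref{Lemmaonrad}, you via submultiplicativity of $\operatorname{rad}$) and multiply through by $\operatorname{rad}(b)$. One small remark: your final chain already yields $\operatorname{rad}(b)\operatorname{rad}(b^k+1) < b^k+1$, which is the $abc$ condition itself, so the initial reduction via Proposition~\ref{abcprop}$(i)$ is not actually needed.
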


A consequence of Theorem~\ref{mainthm1} is that if $(1,c-1,c)$ is an $abc$ triple, then $(1,c^k-1,c^k)$ is an $abc$ triple for each positive integer $k$ (see Corollary~\ref{cortri}). Similarly, we obtain from Theorem~\ref{mainthm2} that if $(1,b,b+1)$ is an $abc$ triple, then $(1,b^k,b^k+1)$ is an $abc$ triple for each odd integer $k$ (see Corollary~\ref{transferkodd}). These results lead to the following question: given an integer $c>1$, for what positive integers $k$ is $(1,c^k-1,c^k)$ an $abc$ triple? We answer this question with Theorem~\ref{gransuggestion}, which provides necessary and sufficient conditions to determine those integers $k$ which yield an $abc$ triple of the form $(1,c^k-1,c^k)$.

In Section~\ref{section3}, we demonstrate various consequences of
Theorems~\ref{mainthm1} and \ref{mainthm2}. For example, we prove that if
$n>1$ is an integer and $p$ is an odd prime such that $p>\operatorname{rad}%
\!\left(  n\right)  $, then $\left(  1,n^{p\left(  p-1\right)  k}-1,n^{p\left(
p-1\right)  k}\right)  $ is an $abc$ triple for each positive integer $k$ (see
Corollary \ref{granville-tucker}). In particular, taking $\left(  n,k\right)
=\left(  2,1\right)  $ allows us to recover Granville and Tucker's original
construction \cite{MR1930670}. Another consequence is the following: if
$n\geq3$ is an odd integer and $b=n^{j}-1$ for some positive integer $j$, then
$\left(  1,b^{nk},b^{nk}+1\right)  $ is an $abc$ triple for each positive odd
integer $k$ (see Corollary \ref{cor3_11}). Taking $\left(  n,j\right)
=\left(  3,1\right)  $ gives us that $\left(  1,8^{k},8^{k}+1\right)  $ is an
$abc$ triple for each odd integer $k$. 

We conclude the article with Section \ref{section4}, which is an analysis of
the $abc$ triples found by the ABC@Home Project of the form $\left(
1,c-1,c\right)  $ with $c<10^{18}$. The ABC@Home Project was a network
computing project that was started in $2006$ by the Mathematics Department of
Leiden University, together with the Dutch Kennislink Science Institute. By
$2011$, they found that there are exactly $14\hspace{0.15em}482\hspace{0.15em}065$ $abc$ triples
$\left(  a,b,c\right)  $ with $c<10^{18}$. By the time the project came to a
close in $2015$, the ABC@Home Project had found a total of $23\hspace{0.15em}827\hspace{0.15em}716$
$abc$ triples $\left(  a,b,c\right)  $ with $c<2^{63}$. We note that this list
is not exhaustive of all $abc$ triples with $c<2^{63}$. In particular, the
ABC@Home project found that there are exactly $45\hspace{0.15em}604$ $abc$ triples of the
form $\left(  1,c-1,c\right)  $ with $c<10^{18}$. Further observations about the $abc$ triples found by the ABC@Home Project can be found in \cite[Chapter~7]{palen}.

Motivated by the results in Section~\ref{section3}, we study those $abc$
triples found by the ABC@Home Project that are of the form $(1,n^{l}-1,n^{l})$
or $(1,n^{l},n^{l}+1)$ for some integer $l>1$. We find that this amounts to
$8\hspace{0.15em}413$ $abc$ triples. For $abc$ triples $\left(  1,c-1,c\right)  $ of the
aforementioned form, we show that approximately $48.7\%$ of the $abc$ triples
with $c\leq 10^{6}$ can be obtained from the results proven in Section
\ref{section3}. We also find that for $abc$ triples of the form $(1,n^{l}%
-1,n^{l})$, there are only four cases where there does not exists a proper
divisor $m$ of $n^{l}-1$ for which $\operatorname{cosocle}\!\left(  m\right)
>\operatorname{rad}\!\left(  n\right)  $.

\section{Main Results\label{section2}}

In this section, we establish Theorems \ref{mainthm1} and \ref{mainthm2}. To
do so, we recall the following elementary property about the radical of a
positive integer.

\begin{lemma}\label{elemlemma}
Let $m$ and $n$ be relatively prime positive integers. Then
$\operatorname{rad}\!\left(  mn\right)  =\operatorname{rad}\!\left(  m\right)
\operatorname{rad}\!\left(  n\right)  $ and $\operatorname{rad}\!\left(
m\right)  \leq m$. Moreover, $\operatorname{rad}\!\left(
m^k\right) =\operatorname{rad}\!\left(
m\right)$ for each positive integer $k$.
\end{lemma}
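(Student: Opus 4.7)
The plan is to unpack the definition of the radical directly and use unique prime factorization; no machinery beyond that is needed. Writing $m = \prod_i p_i^{e_i}$ and $n = \prod_j q_j^{f_j}$ for the prime factorizations of $m$ and $n$, we have $\operatorname{rad}(m) = \prod_i p_i$ and $\operatorname{rad}(n) = \prod_j q_j$.

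First I would handle the multiplicativity statement. Since $\gcd(m,n) = 1$, the sets of primes $\{p_i\}$ and $\{q_j\}$ are disjoint, so the prime factorization of $mn$ is simply $\prod_i p_i^{e_i} \prod_j q_j^{f_j}$ with all the $p_i$ and $q_j$ distinct. Therefore the product of the distinct primes dividing $mn$ factors as $\bigl(\prod_i p_i\bigr)\bigl(\prod_j q_j\bigr) = \operatorname{rad}(m)\operatorname{rad}(n)$, which gives the first claim. For the bound $\operatorname{rad}(m) \leq m$, observe that each $p_i \leq p_i^{e_i}$ since $e_i \geq 1$, so multiplying over $i$ yields $\operatorname{rad}(m) = \prod_i p_i \leq \prod_i p_i^{e_i} = m$.

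For the last assertion, the key observation is that a prime $p$ divides $m^k$ if and only if $p$ divides $m$ (the nontrivial direction uses that $p$ is prime, together with induction on $k$). Hence $m$ and $m^k$ have the same set of distinct prime divisors, and taking products gives $\operatorname{rad}(m^k) = \operatorname{rad}(m)$.

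There is no serious obstacle; the only thing to be careful about is stating precisely that the radical depends only on the set of primes dividing the argument, which is immediate from the definition. Everything else is bookkeeping with prime factorizations.
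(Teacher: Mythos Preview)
Your proof is correct. The paper does not actually prove this lemma; it states it and then remarks ``We will assume Lemma~\ref{elemlemma} implicitly throughout this work,'' treating the result as a standard fact. Your direct argument from unique factorization is exactly the expected justification and is entirely adequate.
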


We will assume Lemma~\ref{elemlemma} implicitly throughout this work. Next, we show an important facet about $abc$ triples of the form $\left(
1,c-1,c\right)  $, which showcases the importance of the cosocle in our arguments.

\begin{proposition}
\label{abcprop}Let $c>1$ be an integer. Then the following are equivalent:
\begin{enumerate}
    \item[($i$)] $\operatorname{cosocle}(c-1)>\operatorname{rad}(c)$;
    \item[($ii$)] $\operatorname{cosocle}(c)>\operatorname{rad}(c-1)$;
    \item[($iii$)] $\left(  1,c-1,c\right)  $ is an $abc$ triple.
\end{enumerate}
\end{proposition}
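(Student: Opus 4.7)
The plan is to set $R := \operatorname{rad}(c-1)\operatorname{rad}(c)$ and translate all three conditions into elementary inequalities involving $R$, $c-1$, and $c$, using the coprimality of $c-1$ and $c$ in a crucial way.

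First I would note that because $\gcd(c-1,c)=1$, Lemma~\ref{elemlemma} gives $\operatorname{rad}(1\cdot(c-1)\cdot c) = \operatorname{rad}(c-1)\operatorname{rad}(c) = R$. Thus condition~(iii) is simply the statement $R < c$. Next, since $\operatorname{rad}(c)$ divides $c$, the integer $\operatorname{cosocle}(c) = c/\operatorname{rad}(c)$ is well-defined, and (ii) is equivalent to $c > \operatorname{rad}(c)\operatorname{rad}(c-1) = R$. Hence (ii) $\Leftrightarrow$ (iii) is immediate. Similarly, (i) unpacks to $c-1 > R$, so the only nontrivial content is the equivalence $c-1 > R \iff c > R$.

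The forward direction is trivial. For the reverse, assume $c > R$; since $c$ and $R$ are integers, $c - 1 \geq R$, so I only need to rule out the possibility $c-1 = R$. Here I would use that $\operatorname{rad}(c)$ divides $c$, hence $\gcd(\operatorname{rad}(c), c-1) = 1$. If $c - 1 = R = \operatorname{rad}(c-1)\operatorname{rad}(c)$, then $\operatorname{rad}(c)$ would divide $c-1$, forcing $\operatorname{rad}(c) = 1$ and therefore $c = 1$, which contradicts the hypothesis $c > 1$. This completes (i) $\Leftrightarrow$ (iii).

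The main (minor) obstacle is this integer-arithmetic subtlety: the left-hand sides of (i) and (ii) are not symmetric, and a naive manipulation produces the inequalities $c-1 > R$ and $c > R$, which are not \emph{a priori} the same. The nontrivial step is the coprimality argument ruling out $c-1 = R$, and it is precisely the coprimality of consecutive integers that makes everything work. Everything else is a one-line rearrangement, so the proof is short but the role of $\gcd(c-1,c)=1$ should be highlighted twice: once to factor the radical, and once to exclude equality.
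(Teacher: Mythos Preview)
Your argument is correct and uses the same two essential ingredients as the paper: the coprimality of $c-1$ and $c$ to factor $\operatorname{rad}\bigl((c-1)c\bigr)$, and then coprimality together with integrality to upgrade a weak inequality to a strict one. The organization differs, however. The paper runs the cycle $(i)\Rightarrow(ii)\Rightarrow(iii)\Rightarrow(i)$: the step $(i)\Rightarrow(ii)$ is handled by the analytic observation $\frac{c-1}{c}<1$, and the step $(iii)\Rightarrow(i)$ goes through a floor computation $\bigl\lfloor \operatorname{cosocle}(c-1)\cdot\tfrac{c}{c-1}\bigr\rfloor=\operatorname{cosocle}(c-1)$ before invoking coprimality. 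Your reformulation via $R=\operatorname{rad}(c-1)\operatorname{rad}(c)$ is tidier: it makes $(ii)\Leftrightarrow(iii)$ a tautology and isolates the only nontrivial point as the purely arithmetic fact that $R<c$ forces $R<c-1$, dispatched by the divisibility contradiction $\operatorname{rad}(c)\mid\gcd(c-1,c)=1$. Both routes are short; yours avoids the floor-function detour and makes the role of coprimality more transparent.
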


\begin{proof}
Suppose that $\operatorname{rad}\!\left(  c\right)  <\operatorname{cosocle}%
\!\left(  c-1\right)  $. From the equalities $\operatorname{rad}\!\left(  c\right)
=\frac{c}{\operatorname{cosocle}(  c)  }$ and
$\operatorname{cosocle}\!\left(  c-1\right)  =\frac{c-1}{\operatorname{rad}%
(  c-1)  }$, we deduce that%
\begin{align*}
\operatorname{rad}\!\left(  c\right)  <\operatorname{cosocle}\!\left(
c-1\right)  \qquad & \text{if and only if}\qquad\frac{c}{\operatorname{cosocle}%
\!\left(  c\right)  }<\frac{c-1}{\operatorname{rad}\!\left(  c-1\right)  }\\
& \text{if and only if}\qquad\operatorname{rad}\!\left(  c-1\right)  <\frac
{c-1}{c}\operatorname{cosocle}\!\left(  c\right)  .
\end{align*}
Since $\frac{c-1}{c}<1$, we have the desired inequality: $\operatorname{rad}%
\!\left(  c-1\right)  <\operatorname{cosocle}\!\left(  c\right)  $.

Next, suppose that $\frac{\operatorname{rad}(c-1)}{\operatorname{cosocle}%
(c)}<1$. Since $\operatorname{rad}\!\left(  c\right)  =\frac{c}%
{\operatorname{cosocle}(c)}$, we observe that%
\[
\operatorname{rad}(c(c-1))=\operatorname{rad}(c)\operatorname{rad}%
(c-1)=\frac{\operatorname{rad}(c-1)}{\operatorname{cosocle}(c)}c<c,
\]
which shows that $\left(  1,c-1,c\right)  $ is an $abc$ triple.

Lastly, if $\left(  1,c-1,c\right)  $ is an $abc$ triple, then
$\operatorname{rad}(c(c-1))<c$. Consequently,%
$$c >\operatorname{rad}(c(c-1))=\operatorname{rad}(c)\operatorname{rad}%
(c-1)=\frac{\operatorname{rad}(c)(c-1)}{\operatorname{cosocle}(c-1)}.$$
This implies that
$$ \operatorname{rad}(c)<\operatorname{cosocle}
(c-1)\frac{c}{c-1}.$$
Since $\operatorname{rad}(c)$ is an integer and $\frac{c}{c-1}>1$, we deduce
that $\operatorname{rad}(c)\leq\left\lfloor \operatorname{cosocle}%
(c-1)\frac{c}{c-1}\right\rfloor $, where $\left\lfloor x\right\rfloor $
denotes the floor function. Since $\frac{\operatorname{cosocle}(c-1)}{c-1}<1$,
we observe that
\begin{align*}
\left\lfloor \operatorname{cosocle}(c-1)\frac{c}{c-1}\right\rfloor  &
=\left\lfloor \operatorname{cosocle}(c-1)+\frac{\operatorname{cosocle}%
(c-1)}{c-1}\right\rfloor \\
&  =\operatorname{cosocle}(c-1).
\end{align*}
Lastly, $c$ is relatively prime to $c-1$, and thus $\operatorname{cosocle}%
(c-1)>\operatorname{rad}(c)$.
\end{proof}

An automatic consequence of Proposition~\ref{abcprop} is that if $c$ or $c-1$ is squarefree, then~$(1,c-1,c)$ is not an $abc$ triple since the cosocle of a squarefree positive integer is $1$. 
Our next result establishes that the radical of a positive integer $n$ is preserved if $n$ is divided by the cosocle of any of its divisors.

\begin{lemma}
\label{Lemmaonrad}Let $m$ and $n$ be positive integers. If $m$ divides $n$,
then $\operatorname{rad}(n)=\operatorname{rad}\!\left(  \frac{n}%
{\operatorname{cosocle}(m)}\right)  $.
\end{lemma}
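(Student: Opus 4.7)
The plan is to reduce the statement to a comparison of prime supports, since two positive integers have equal radicals if and only if they are divisible by the same set of primes. So it suffices to show that $n$ and $n/\operatorname{cosocle}(m)$ are divisible by exactly the same primes.

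First I would fix prime factorizations $n=\prod_p p^{a_p}$ and $m=\prod_p p^{b_p}$, where the products run over all primes and almost all exponents are zero. Because $m$ divides $n$, we have $0\le b_p \le a_p$ for every prime $p$. By the definition of the cosocle together with Lemma~\ref{elemlemma},
\[
\operatorname{cosocle}(m)=\frac{m}{\operatorname{rad}(m)}=\prod_{b_p\ge 1}p^{b_p-1},
\]
so $\operatorname{cosocle}(m)$ divides $m$, and hence divides $n$. A direct computation then gives
\[
\frac{n}{\operatorname{cosocle}(m)}=\prod_{b_p=0}p^{a_p}\cdot\prod_{b_p\ge 1}p^{a_p-b_p+1}.
\]

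Now I would read off the prime support of this integer. For primes $p$ with $b_p=0$, the exponent is $a_p$, which is positive exactly when $p\mid n$. For primes $p$ with $b_p\ge 1$, the inequality $b_p\le a_p$ yields $a_p-b_p+1\ge 1$, so such a prime always divides $n/\operatorname{cosocle}(m)$; and since $b_p\ge 1$ forces $a_p\ge 1$, such a prime also divides $n$. In both cases the prime $p$ divides $n/\operatorname{cosocle}(m)$ if and only if it divides $n$, and therefore $\operatorname{rad}(n/\operatorname{cosocle}(m))=\operatorname{rad}(n)$.

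The only subtle point is making sure we do not lose a prime when dividing out $\operatorname{cosocle}(m)$; this is precisely handled by the estimate $a_p-b_p+1\ge 1$, which uses the hypothesis $m\mid n$. No genuine obstacle is expected, as the argument is a one-line verification in exponents once the factorizations are written down.
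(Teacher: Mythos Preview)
Your proof is correct and follows essentially the same approach as the paper: both write out the prime factorizations, compute $\operatorname{cosocle}(m)$ explicitly, and observe that the exponent of each prime dividing $m$ in $n/\operatorname{cosocle}(m)$ is at least $1$ via the inequality $a_p-b_p+1\ge 1$ (the paper's $f_i-e_i+1\ge 1$). Your presentation in terms of prime supports is a slight rephrasing, but the argument is identical.
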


\begin{proof}
If $m=1$, there is nothing to show. So suppose that $m>1$ and let
$m=\prod_{i=1}^{r}p_{i}^{e_{i}}$ be the unique prime factorization of $m$,
with each $p_{i}$ denoting a distinct prime. Since $m$ divides $n$, we have
that $n=q\prod_{i=1}^{r}p_{i}^{f_{i}}$ where $e_{i}\leq f_{i}$ for $1\leq i\leq
r$ and $q$ is relatively prime to $m$. Since $\operatorname{cosocle}(m)=\prod_{i=1}%
^{r}p_{i}^{e_{i}-1}$, we deduce that
\[
\frac{n}{\operatorname{cosocle}(m)}=q\prod_{i=1}^{r}p_{i}^{f_{i}-e_{i}+1}.
%.
\]
For $1\leq i\leq r$, observe that $f_{i}-e_{i}+1\geq1$ and thus
$\operatorname{rad}\!\left(  \frac{n}{\operatorname{cosocle}(m)}\right)
=\operatorname{rad}(n)$.
\end{proof}

With this lemma, we are now ready to prove Theorem \ref{mainthm1}.

\begin{proof}
[Proof of Theorem \ref{mainthm1}.]Since $c^{k}-1=\left(  c-1\right)  \sum
_{j=0}^{k-1}c^{j}$, we deduce that $m$ divides
$c^{k}-1$ for each positive integer $k$. By Lemma \ref{Lemmaonrad},
$\operatorname{rad}\!\left(  c^{k}-1\right)  =\operatorname{rad}\!\left(
\frac{c^{k}-1}{\operatorname{cosocle}(m)}\right)  $. By assumption,
$\frac{\operatorname{rad}(c)}{\operatorname{cosocle}(m)}<~1$ and thus 
\[
\operatorname{rad}\!\left(  c^{k}\left(  c^{k}-1\right)  \right)
=\operatorname{rad}(c)\operatorname{rad}\!\left(  \frac{c^{k}-1}%
{\operatorname{cosocle}(m)}\right)  \leq\frac{\operatorname{rad}%
(c)}{\operatorname{cosocle}(m)}\left(  c^{k}-1\right)  <c^{k}-1.
\]
The result now follows since%
\[
c^{k}-\operatorname{rad}\!\left(  c^{k}\left(  c^{k}-1\right)  \right)
>c^{k}-c^{k}+1=1. \qedhere
\]

\end{proof}
An immediate consequence of Theorem~\ref{mainthm1} and Proposition~\ref{abcprop} is the following result.

\begin{corollary}\label{cortri}
If $\left(  1,c-1,c\right)  $ is an $abc$ triple, then $\left(  1,c^{k}%
-1,c^{k}\right)  $ is an $abc$ triple for each positive integer $k$.
\end{corollary}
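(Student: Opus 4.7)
The plan is to combine Proposition~\ref{abcprop} with Theorem~\ref{mainthm1} by taking $m=c-1$. Specifically, the hypothesis that $(1,c-1,c)$ is an $abc$ triple is, by the equivalence (iii)$\Leftrightarrow$(i) of Proposition~\ref{abcprop}, precisely the statement that $\operatorname{cosocle}(c-1) > \operatorname{rad}(c)$.

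With this in hand, I would set $m := c-1$. Then trivially $m \mid c-1$, and the inequality $\operatorname{cosocle}(m) = \operatorname{cosocle}(c-1) > \operatorname{rad}(c)$ is exactly what we just extracted. Thus both hypotheses of Theorem~\ref{mainthm1} are satisfied, and we conclude that $(1, c^k - 1, c^k)$ is an $abc$ triple for every positive integer $k$.

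There is essentially no obstacle here, since the corollary is designed to be a direct specialization of Theorem~\ref{mainthm1} via the characterization in Proposition~\ref{abcprop}. The only small thing worth noting (though it need not appear in the write-up) is that one needs $c > 1$ for Theorem~\ref{mainthm1} to apply; but this is automatic since $(1, c-1, c)$ being an $abc$ triple forces $c-1 \geq 1$, and in fact $c > 1$ is needed just for $(1, c-1, c)$ to consist of positive integers. Thus the proof is a two-line application of the preceding results.
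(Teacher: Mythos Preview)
Your proposal is correct and matches the paper's approach exactly: the paper states that this corollary is an immediate consequence of Theorem~\ref{mainthm1} and Proposition~\ref{abcprop}, which is precisely the combination you describe (take $m=c-1$ and use the equivalence $(iii)\Leftrightarrow(i)$).
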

\noindent In the next section, we will consider further consequences of Theorem~\ref{mainthm1} that do not require knowledge of an $abc$ triple at the start. The proof of Theorem \ref{mainthm1} relies on the
factorization of $c^{k}-1$. A similar factorization holds for $b^{k}+1$ if
$k$ is odd, and our proof of Theorem \ref{mainthm2} makes use of this.

\begin{proof}
[Proof of Theorem \ref{mainthm2}.]Observe that for each positive odd integer $k$, the following equality holds:
$b^{k}+1=\left(  b+1\right)  \sum_{j=0}^{k-1}\left(  -1\right)
^{j}b^{j}$. It follows that $m$ divides $b^{k}+1$ for each positive integer
$k$. By Lemma \ref{Lemmaonrad}, $\operatorname{rad}(b^{k}%
+1)=\operatorname{rad}\!\left(  \frac{b^{k}+1}{\operatorname{cosocle}%
(m)}\right)  $. Since $\frac{\operatorname{rad}(b)}{\operatorname{cosocle}%
(m)}<1$, we observe that%
\[
\operatorname{rad}\!\left(  b^{k}\left(  b^{k}+1\right)  \right)
=\operatorname{rad}(b)\operatorname{rad}\!\left(  \frac{b^{k}+1}%
{\operatorname{cosocle}(m)}\right)  \leq\frac{\operatorname{rad}%
(b)}{\operatorname{cosocle}(m)}\operatorname{rad}(b^{k}+1)<b^{k}+1.
\]
Consequently,%
\[
b^{k}+1-\operatorname{rad}\!\left(  b^{k}\left(  b^{k}+1\right)  \right)
>b^{k}+1-b^{k}-1=0.\qedhere
\]

\end{proof}

Similarly to the deduction of Corollary \ref{cortri}, we now recover the
following result as an immediate consequence of Theorem \ref{mainthm2} and
Proposition \ref{abcprop}.

\begin{corollary}\label{transferkodd}
If $\left(  1,b,b+1\right)  $ is an $abc$ triple, then $\left(  1,b^{k}%
,b^{k}+1\right)  $ is an $abc$ triple for each positive odd integer $k$.
\end{corollary}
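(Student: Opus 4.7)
The plan is to combine Proposition \ref{abcprop} with Theorem \ref{mainthm2} by making the natural choice $m = b+1$, which is exactly parallel to how Corollary \ref{cortri} is deduced from Theorem \ref{mainthm1} and Proposition \ref{abcprop}. First I would apply Proposition \ref{abcprop} with $c := b+1$ (so that $c-1 = b$). The hypothesis that $(1,b,b+1)$ is an $abc$ triple is condition (iii) for this value of $c$, which by the proposition is equivalent to condition (ii): $\operatorname{cosocle}(b+1) > \operatorname{rad}(b)$.

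Next I would feed this into Theorem \ref{mainthm2} with the choice $m := b+1$. The divisibility $m \mid b+1$ is trivial, and the cosocle inequality just extracted from Proposition \ref{abcprop} is precisely $\operatorname{cosocle}(m) > \operatorname{rad}(b)$. Hence both hypotheses of Theorem \ref{mainthm2} are satisfied, and its conclusion gives that $(1, b^k, b^k + 1)$ is an $abc$ triple for every positive odd integer $k$, as desired.

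There is no genuine obstacle in this argument: the result is immediate once the correct translation is applied, because Proposition \ref{abcprop} is engineered to recast the $abc$-triple condition into the cosocle inequality that forms the hypothesis of Theorem \ref{mainthm2}. The only thing to keep track of is that Theorem \ref{mainthm2} requires $k$ to be odd (since the underlying factorization $b^k+1 = (b+1)\sum_{j=0}^{k-1}(-1)^j b^j$ used in its proof only holds for odd $k$), and this is exactly the restriction appearing in the statement of the corollary.
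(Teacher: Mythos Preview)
Your argument is correct and is exactly the approach the paper takes: it states the corollary as an immediate consequence of Theorem~\ref{mainthm2} and Proposition~\ref{abcprop}, parallel to the deduction of Corollary~\ref{cortri}. Your choice $m=b+1$ and the use of implication $(iii)\Rightarrow(ii)$ of Proposition~\ref{abcprop} is precisely what is intended.
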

Since $(1,8,9)$ is an $abc$ triple, we deduce from Corollary~\ref{transferkodd} that $(1,8^k,8^k + 1)$ is an $abc$ triple for each positive odd integer $k$. We will also recover this sequence of $abc$ triples as a consequence of Corollary~\ref{cor3_11}.

By Corollary \ref{cortri}, we have that if $\left(  1,c-1,c\right)  $ is an
$abc$ triple, then $\left(  1,c^{k}-1,c^{k}\right)  $ is an $abc$ triple for
each positive integer $k$. This leads us to ask: given a positive integer
$c>1$, for what positive integers $k$ is $\left(  1,c^{k}-1,c^{k}\right)  $ an
$abc$ triple? To answer this question, we first recall a few number theory facts. Given a prime number $p$ and an integer $n$,
the $p$\textit{-adic valuation} of~$n$, denoted $v_{p}(n)$, is the unique
integer that satisfies $n=p^{v_{p}(n)}q$ for some integer $q$ that is
relatively prime to $p$. It is easily verified that the following identities hold for each integer $x,y,k$: $v_{p}(xy)=v_{p}(x)+v_{p}(y)$, $v_{p}(\frac{x}{y})=v_{p}(x)-v_{p}(y)$, and $v_{p}(x^k)=kv_{p}(x)$. In order to determine  the exact power of a prime
$p$ that divides $c^{k}-1$, we first consider the following lemma about the binomial coefficient $\binom{y}{j}$.

\begin{lemma}
\label{lemmanew}Let $x,y\geq2$ be integers and let $p$ be a prime that divides
$x$. Then for each $j$ with $2\leq j\leq y$, the following inequality holds:
\[
v_{p}(xy)\leq v_{p}\left(  \binom{y}{j}x^{j}\right)  .
\]
Moreover, equality holds if and only if $\left(  p,j\right)  =\left(
2,2\right)  $ and $x\equiv2\ \operatorname{mod}4$ with $y$ even.
\end{lemma}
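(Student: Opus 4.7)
The plan is to pass to $p$-adic valuations and reduce the inequality to a clean statement about $v_p(j)$. Using $v_p(xy) = v_p(x) + v_p(y)$ and $v_p\bigl(\binom{y}{j} x^j\bigr) = v_p\binom{y}{j} + j v_p(x)$, the target inequality rearranges to
$$v_p(y) - v_p\binom{y}{j} \leq (j-1) v_p(x).$$
Next, I would invoke the combinatorial identity $j \binom{y}{j} = y \binom{y-1}{j-1}$, which on $p$-adic valuations reads $v_p(y) - v_p\binom{y}{j} = v_p(j) - v_p\binom{y-1}{j-1}$. Since $v_p\binom{y-1}{j-1} \geq 0$, it suffices to prove the cleaner inequality
$$v_p(j) \leq (j-1) v_p(x).$$

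This bound then follows from chaining two elementary estimates. On one hand, $v_p(j) \leq \log_p j \leq \log_2 j \leq j - 1$ for $j \geq 2$, using $p^{v_p(j)} \mid j$ and $p \geq 2$. On the other hand, $(j - 1) v_p(x) \geq j - 1$ since $v_p(x) \geq 1$ by the hypothesis $p \mid x$. Combining these gives $v_p(j) \leq j - 1 \leq (j-1) v_p(x)$, which is the required inequality.

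For the ``moreover'' part, I would trace back when each inequality above is sharp. Equality in the main statement forces simultaneously $v_p\binom{y-1}{j-1} = 0$, $v_p(j) = j - 1$, and $v_p(x) = 1$. The condition $v_p(j) = j - 1$ means $p^{j-1} \leq j$, which together with $p \geq 2$ and $j \geq 2$ forces $(p, j) = (2, 2)$; then $v_p(x) = 1$ with $p = 2$ is precisely $x \equiv 2 \pmod{4}$, and $v_p\binom{y-1}{j-1} = v_2(y-1) = 0$ is equivalent to $y$ being even. Conversely, one checks that under these three conditions every step becomes an equality. The main obstacle will really just be the careful bookkeeping of the equality case; the inequality itself is essentially automatic once one has the identity $j \binom{y}{j} = y \binom{y-1}{j-1}$.
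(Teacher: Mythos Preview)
Your proof is correct and uses essentially the same approach as the paper: both hinge on the identity $j\binom{y}{j} = y\binom{y-1}{j-1}$ (equivalently, $\frac{\binom{y}{j}x^j}{xy} = \binom{y-1}{j-1}\frac{x^{j-1}}{j}$) together with the elementary bound $v_p(j) \leq j-1$. The only cosmetic difference is that the paper splits off the case $j=2$ and treats it by direct computation, whereas you handle all $j\geq 2$ uniformly and recover the $j=2$ equality case from your sharpness analysis; this is a mild streamlining rather than a different argument.
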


\begin{proof}
In the case when $j=2$, we have that
\[
\frac{\binom{y}{2}x^{2}}{xy}=\frac{x(y-1)}{2}.
\]
Since $p$ divides $x$, we have that
\begin{equation}
v_{p}\!\left(  \frac{x(y-1)}{2}\right)  \geq0\qquad\text{if and only if}\qquad
v_{p}(xy)\leq v_{p}\left(  \binom{y}{2}x^{2}\right)  . \label{padiclem}%
\end{equation}
Moreover, equality in Equation (\ref{padiclem}) holds if and only if $p=2$ and
$v_{2}\!\left(  x(y-1)\right)  =1$. Since $x$ is even, it follows that this is
equivalent to $x\equiv2\ \operatorname{mod}4$ and $y$ is even.

Now suppose that $3\leq j\leq y$. It suffices to show that $v_{p}\!\left(
\frac{\binom{y}{j}x^{j}}{xy}\right)  >0$. To this end, observe that%
\[
\frac{\binom{y}{j}x^{j}}{xy}=\binom{y-1}{j-1}\frac{x^{j-1}}{j}.
\]
For each prime $p$, it is always the case that $v_{p}(j)<j-1$. Thus
$v_{p}\!\left(  \frac{x^{j-1}}{j}\right)  >0$ since~$p|x$. The result now
follows since $v_{p}\!\left(  \binom{y-1}{j-1}\right)  \geq0$ and hence
\[
v_{p}\!\left(  \frac{\binom{y}{j}x^{j}}{xy}\right)  \geq v_{p}\!\left(
\frac{x^{j-1}}{j}\right)  >0. \qedhere
\]
\end{proof}

Now suppose that $p$ is a prime that does not divide an integer $n$.
Then the \textit{order} of $n$ modulo~$p$, denoted $\operatorname*{ord}%
_{p}(n)$, is the least positive integer for which $n^{\operatorname*{ord}%
_{p}(n)}\equiv1\ \operatorname{mod}p$. By Fermat's Little Theorem,
$\operatorname*{ord}_{p}(n)$ divides $p-1$. More generally, $n^{k}%
\equiv1\ \operatorname{mod}p$ if and only if $\operatorname*{ord}_{p}(n)$
divides $k$. With this terminology, we now deduce the exact power of a prime
$p$ that divides $c^{k}-1$.

\begin{lemma}
\label{lemmagran1}Let $c$ and $k$ be positive integers with $c>1$. Then $p$
divides $c^{k}-1$ if and only if $\operatorname*{ord}_{p}(c)$ divides $k$.
Moreover, if $p$ divides $c^{k}-1$, then $v_{p}\!\left(  c^{k}-1\right)
=f_{p}+w_{p}$, where $w_{p}=v_{p}\!\left(  k\right)  $ and%
\begin{equation}
f_{p}=\left\{
\begin{array}
[c]{cl}%
v_{2}\!\left(  c^{2}-1\right)  -1 & \text{if }p=2,c\equiv3\ \operatorname{mod}%
4,\text{ and }k\text{ is even,}\\
v_{p}\!\left(  c^{\operatorname*{ord}_{p}(c)}-1\right)  & \text{otherwise.}%
\end{array}
\right.  \label{defoffp}%
\end{equation}

\end{lemma}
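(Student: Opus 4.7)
The plan is to establish the two assertions in turn, with the valuation formula being the substantive part. For the first, I would run the standard order argument: writing $d = \operatorname*{ord}_{p}(c)$ (well-defined since $p \mid c^{k}-1$ forces $p \nmid c$), if $d \mid k$ then $c^{k} = (c^{d})^{k/d} \equiv 1 \pmod{p}$; conversely, the division algorithm $k = dq + r$ with $0 \leq r < d$ gives $c^{r} \equiv 1 \pmod{p}$, forcing $r = 0$ by minimality of $d$.

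For the valuation formula, I would set $c' = c^{d}$ and $k' = k/d$ and expand via the binomial theorem,
\[
c^{k} - 1 = (1 + (c'-1))^{k'} - 1 = k'(c'-1) + \sum_{j=2}^{k'}\binom{k'}{j}(c'-1)^{j}.
\]
Since $p \mid c' - 1$, Lemma~\ref{lemmanew} applied with $x = c'-1$ and $y = k'$ (the edge case $k' = 1$ being trivial) shows that each $j \geq 2$ summand has $p$-adic valuation at least $v_{p}(k'(c'-1)) = v_{p}(k') + v_{p}(c^{d}-1)$, with equality attained only in the single exceptional situation $(p,j) = (2,2)$ with $c'-1 \equiv 2 \pmod{4}$ and $k'$ even.

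Away from this exceptional situation, the $j = 1$ term is the strict minimum, so $v_{p}(c^{k}-1) = v_{p}(k') + v_{p}(c^{d}-1)$. Because $d \mid p - 1$ when $p$ is odd (Fermat's Little Theorem) and $d = 1$ when $p = 2$, one always has $p \nmid d$, hence $v_{p}(k') = v_{p}(k) = w_{p}$, and the identity $v_{p}(c^{k}-1) = f_{p} + w_{p}$ emerges from the ``otherwise'' branch of \eqref{defoffp}.

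The main obstacle is the exceptional case, which forces $p = 2$, $d = 1$, $c \equiv 3 \pmod{4}$, and $k$ even. The key observation that resolves it is that $c \equiv 3 \pmod{4}$ implies $c^{2} \equiv 1 \pmod{8}$, so $v_{2}(c^{2}-1) \geq 3$ and in particular $c^{2} - 1 \not\equiv 2 \pmod{4}$. Rewriting $c^{k} - 1 = (c^{2})^{k/2} - 1$ and re-running the generic-case analysis with $c^{2}$ and $k/2$ playing the roles of $c'$ and $k'$ (where the exceptional triggering condition of Lemma~\ref{lemmanew} no longer holds) should then yield
\[
v_{2}(c^{k}-1) = v_{2}(c^{2}-1) + v_{2}(k/2) = \bigl(v_{2}(c^{2}-1) - 1\bigr) + v_{2}(k) = f_{2} + w_{2},
\]
matching the first branch of \eqref{defoffp} and completing the argument.
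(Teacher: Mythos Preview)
Your proof is correct and follows essentially the same route as the paper: both arguments expand $(1+(c^{d}-1))^{k/d}$ (respectively $(1+(c^{2}-1))^{k/2}$ in the exceptional case) via the Binomial Theorem and invoke Lemma~\ref{lemmanew} to isolate the leading term, using $p\nmid d$ to identify $v_{p}(k')$ with $w_{p}$. The only organizational difference is that the paper splits into the two cases up front, whereas you run the generic binomial argument first and let the equality condition in Lemma~\ref{lemmanew} single out the exceptional case $p=2$, $c\equiv 3\pmod 4$, $k$ even.
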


\begin{proof}
The statement that $p$ divides $c^{k}-1$ if and only if $\operatorname*{ord}%
_{p}(c)$ divides $k$ is a standard number theory result. So suppose that $p$
divides $c^{k}-1$. Then $\operatorname*{ord}_{p}(c)$ divides $k$ and $p-1$.
The latter is due to Fermat's Little Theorem. In particular,
$\operatorname*{ord}_{p}(c)$ is not divisible by $p$. Note that if $k=1$, then
there is nothing to show. We now proceed by cases and suppose that $k\geq2$.

\textbf{Case 1.} Suppose that $p=2$, $c\equiv3\ \operatorname{mod}4$, and $k$
is even. The result holds if $k=2$, and so we may assume that $k\geq4$. By the
Binomial Theorem,%
\[
c^{k}=\left(  c^{2}\right)  ^{\frac{k}{2}}=\left(  c^{2}-1+1\right)
^{\frac{k}{2}}=1+\frac{k}{2}\left(  c^{2}-1\right)  +\sum_{j=2}^{\frac{k}{2}%
}\binom{\frac{k}{2}}{j}\left(  c^{2}-1\right)  ^{j}.
\]
Since $c^{2}-1\equiv0\ \operatorname{mod}8$, it follows from Lemma
\ref{lemmanew} that%
\[
f_{2}+w_{2}=v_{2}\!\left(  \frac{k}{2}\left(  c^{2}-1\right)  \right)
<v_{2}\!\left(  \binom{\frac{k}{2}}{j}\left(  c^{2}-1\right)  ^{j}\right)  ,
\]
for $2\leq j\leq\frac{k}{2}$. In particular, $c^{k}\equiv1+\frac{k}{2}\left(
c^{2}-1\right)  \ \operatorname{mod}2^{f_{2}+w_{2}+1}\neq 1$ and $c^{k}\equiv
1\ \operatorname{mod}2^{f_{2}+w_{2}}$, from which we conclude that
$v_{2}\!\left(  c^{k}-1\right)  =f_{2}+w_{2}$.

\textbf{Case 2.} Suppose that $p$ is odd or $p=2$ with $c\equiv
1\ \operatorname{mod}4$ or $k$ odd. Write $k=q_{1}p^{w_{p}}\operatorname*{ord}%
_{p}(c)$ for some integer $q_{1}$ that is not divisible by $p$. By the
Binomial Theorem, we obtain%
\begin{align*}
c^{k}=\left(  c^{\operatorname*{ord}_{p}(c)}\right)  ^{q_{1}p^{w_{p}}}  &
=\left(  c^{\operatorname*{ord}_{p}(c)}-1+1\right)  ^{q_{1}p^{w_{p}}}\\
& =1+q_{1}p^{w_{p}}\left(  c^{\operatorname*{ord}_{p}(c)}-1\right)
+\sum_{j=2}^{q_{1}p^{w_{p}}}\binom{q_{1}p^{w_{p}}}{j}\left(
c^{\operatorname*{ord}_{p}(c)}-1\right)  ^{j}.
\end{align*}
From Lemma \ref{lemmanew}, we deduce that
\[
f_{p}+w_{p}=v_{p}\!\left(  q_{1}p^{w_{p}}\left(  c^{\operatorname*{ord}%
_{p}(c)}-1\right)  \right)  <v_{p}\!\left(  \binom{q_{1}p^{w_{p}}}{j}\left(
c^{\operatorname*{ord}_{p}(c)}-1\right)  ^{j}\right)
\]
for $2\leq j\leq q_{1}p^{w_{p}}$. Therefore $c^{k}\equiv1+q_{1}p^{w_{p}%
}\left(  c^{\operatorname*{ord}_{p}(c)}-1\right)  \ \operatorname{mod}%
p^{f_{p}+w_{p}+1} \neq 1$ and $c^{k}\equiv1\ \operatorname{mod}p^{f_{p}+w_{p}}$.
Hence $v_{p}\!\left(  c^{k}-1\right)  =f_{p}+w_{p}$.
\end{proof}

As an immediate consequence of Lemma \ref{lemmagran1} and the Fundamental Theorem of Arithmetic, we obtain the following factorization for $c^{k}-1$.

\begin{corollary}
\label{CorGran}Let $c$ and $k$ be positive integers with $c>1$. Then with
notation as in Lemma \ref{lemmagran1},%
\[
c^{k}-1=\prod_{\operatorname*{ord}_{p}(c)|k}p^{f_{p}+w_{p}}.
\]

\end{corollary}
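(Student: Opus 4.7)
The plan is to assemble the statement directly from the two assertions of Lemma~\ref{lemmagran1} together with the Fundamental Theorem of Arithmetic; the result really is immediate, and the only work lies in arranging the indexing carefully.

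First, I would start from the unique prime factorization
\[
c^{k}-1 \;=\; \prod_{p} p^{v_{p}(c^{k}-1)},
\]
viewed as a finite product over all primes $p$ (all but finitely many exponents being zero). The first clause of Lemma~\ref{lemmagran1} says that $p \mid c^{k}-1$ if and only if $\operatorname{ord}_{p}(c) \mid k$, so $v_{p}(c^{k}-1)=0$ whenever $\operatorname{ord}_{p}(c) \nmid k$. Discarding these trivial factors collapses the product to
\[
c^{k}-1 \;=\; \prod_{\operatorname{ord}_{p}(c)\mid k} p^{v_{p}(c^{k}-1)}.
\]

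Second, I would invoke the second clause of Lemma~\ref{lemmagran1}, which, for every prime $p$ appearing in this restricted range, supplies the explicit exponent $v_{p}(c^{k}-1)=f_{p}+w_{p}$ with $f_p$ given by~\eqref{defoffp} and $w_p = v_p(k)$. Substituting this into each exponent yields the identity in the statement. The only minor subtlety — hardly an obstacle — is to observe that the indexing condition $\operatorname{ord}_{p}(c) \mid k$ automatically forces $\gcd(p,c)=1$, since $\operatorname{ord}_{p}(c)$ is defined only for primes $p$ coprime to $c$; this guarantees that $f_{p}$ and $w_{p}$ make sense for every index of the product. With that remark the corollary follows, and no further calculation is required.
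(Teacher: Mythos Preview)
Your proposal is correct and matches the paper's approach exactly: the paper simply states that the corollary is ``an immediate consequence of Lemma~\ref{lemmagran1} and the Fundamental Theorem of Arithmetic'' without writing out any details, and your argument is precisely the spelling-out of that sentence. Your additional remark that the indexing condition $\operatorname{ord}_p(c)\mid k$ presupposes $\gcd(p,c)=1$ is a helpful clarification the paper leaves implicit.
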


As a demonstration of Corollary \ref{CorGran}, let $c=21$ and $k=12$. With
notation as above, we see that $w_{2}=2,\ w_{3}=1$, and $w_{p}=0$ for each
prime $p\neq2,3$. Next, we observe that%
\begin{equation}
21^{12}-1=2^{4}\cdot5\cdot11\cdot13\cdot17\cdot61\cdot421\cdot463\cdot
3181.\label{factp}%
\end{equation}
By Lemma \ref{lemmagran1}, the primes appearing in Equation (\ref{factp}) are precisely
those primes $p$ for which $\operatorname*{ord}_{p}(21)$ divides $12$. With a
computer algebra system, such as SageMath \cite{sagemath}, it is checked that
$f_{p}=1$ for each prime $p\neq2$ appearing in Equation (\ref{factp}) and $f_{2}=2$.
Thus, $21^{12}-1=\prod_{\operatorname*{ord}_{p}(21)|12}p^{f_{p}+w_{p}}$. 

\begin{theorem}
\label{gransuggestion}Let $c$ and $k$ be positive integers with $c>1$. With
notation as in Corollary \ref{CorGran}, write%
\[
c^{k}-1=\prod_{\operatorname*{ord}_{p}(c)|k}p^{f_{p}+w_{p}}.
\]
Then $\left(  1,c^{k}-1,c^{k}\right)  $ is an $abc$
triple if and only if one of the following conditions hold:
\begin{itemize}
    \item [$(i)$] there exists a prime $p>\operatorname{rad}(c)$ such that
$\operatorname*{ord}_{p}(c)$ divides $k$ and either $f_{p}\geq2$ or $w_{p}%
\geq1$;

\item [$(ii)$] there exists a prime $p<\operatorname{rad}(c)$ such that
$\operatorname*{ord}_{p}(c)$ divides $k$ and $f_{p}+w_{p}-1\geq m_{p}$, where
$m_{p}$ denote the least positive integer such that $p^{m_{p}}%
>\operatorname{rad}(c)$;

\item [$(iii)$]  for each prime $p$ such that $\operatorname*{ord}%
_{p}(c)$ divides $k$, there exist a non-negative integer $a_{p}\leq f_{p}+w_{p}-1$
such that $\prod_{\operatorname*{ord}_{p}(c)|k}p^{a_{p}}>\operatorname{rad}%
(c)$.
\end{itemize}
\end{theorem}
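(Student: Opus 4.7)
The plan is to reduce the $abc$ triple condition to a single inequality involving $\operatorname{cosocle}(c^{k}-1)$ and $\operatorname{rad}(c)$, and then check each of the three conditions against it. By Proposition \ref{abcprop} applied to $c^{k}$, the triple $(1,c^{k}-1,c^{k})$ is an $abc$ triple if and only if $\operatorname{cosocle}(c^{k}-1) > \operatorname{rad}(c^{k}) = \operatorname{rad}(c)$. Combining the factorization in Corollary \ref{CorGran} with the definition of cosocle,
\begin{equation*}
\operatorname{cosocle}(c^{k}-1) = \frac{c^{k}-1}{\operatorname{rad}(c^{k}-1)} = \prod_{\operatorname*{ord}_{p}(c)\mid k} p^{f_{p}+w_{p}-1}.
\end{equation*}
The theorem therefore reduces to showing that this product exceeds $\operatorname{rad}(c)$ if and only if one of the conditions (i), (ii), (iii) holds.

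For the sufficient direction I would treat the three conditions in turn. First observe that whenever $\operatorname*{ord}_{p}(c)\mid k$ we have $p\mid c^{k}-1$, so Lemma \ref{lemmagran1} yields $f_{p}+w_{p}\geq 1$, and hence every factor in the displayed product is at least $1$. If (i) holds, then $f_{p}\geq 2$ or $w_{p}\geq 1$ combined with the baseline $f_{p}\geq 1$ forces $f_{p}+w_{p}-1\geq 1$, so the single factor $p^{f_{p}+w_{p}-1}\geq p>\operatorname{rad}(c)$. If (ii) holds, then the single factor $p^{f_{p}+w_{p}-1}\geq p^{m_{p}}>\operatorname{rad}(c)$ by the defining property of $m_{p}$. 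If (iii) holds, then because $a_{p}\leq f_{p}+w_{p}-1$ for every relevant $p$,
\begin{equation*}
\operatorname{rad}(c) \;<\; \prod_{\operatorname*{ord}_{p}(c)\mid k} p^{a_{p}} \;\leq\; \prod_{\operatorname*{ord}_{p}(c)\mid k} p^{f_{p}+w_{p}-1} \;=\; \operatorname{cosocle}(c^{k}-1).
\end{equation*}

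For the converse, assume $(1,c^{k}-1,c^{k})$ is an $abc$ triple, so $\operatorname{cosocle}(c^{k}-1)>\operatorname{rad}(c)$. Taking $a_{p}=f_{p}+w_{p}-1$ for every prime $p$ with $\operatorname*{ord}_{p}(c)\mid k$ produces a non-negative integer choice (since $f_{p}+w_{p}\geq 1$) whose product equals $\operatorname{cosocle}(c^{k}-1)$ and therefore exceeds $\operatorname{rad}(c)$, which is precisely condition (iii). In particular, (iii) alone is already equivalent to the $abc$ triple property, and (i), (ii) appear in the statement as easily-checkable sufficient criteria tied to a single prime.

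The argument is mostly bookkeeping: it layers the explicit $p$-adic valuation formula of Lemma \ref{lemmagran1} (repackaged as Corollary \ref{CorGran}) on top of the cosocle reformulation in Proposition \ref{abcprop}. The one place to be careful is verifying the non-negativity $f_{p}+w_{p}-1\geq 0$, which makes the choice $a_{p}=f_{p}+w_{p}-1$ admissible in condition (iii); this is immediate from Lemma \ref{lemmagran1}, but it is the one point where a careless writeup could leave a gap.
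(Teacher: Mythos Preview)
Your proof is correct and follows essentially the same approach as the paper: both reduce, via Proposition~\ref{abcprop} and the factorization in Corollary~\ref{CorGran}, to the inequality $\prod_{\operatorname{ord}_p(c)\mid k}p^{f_p+w_p-1}>\operatorname{rad}(c)$, and then check each condition against it. The only cosmetic difference is that the paper routes the sufficiency of (i)--(iii) through Theorem~\ref{mainthm1} by exhibiting an explicit divisor $m$ of $c^k-1$ with $\operatorname{cosocle}(m)>\operatorname{rad}(c)$, whereas you bound the full cosocle directly; the underlying arithmetic is identical.
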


\begin{proof}
First suppose that $\left(  1,c^{k}-1,c^{k}\right)  $ is an $abc$ triple. By
Proposition \ref{abcprop}, this is equivalent to%
\[
\operatorname{rad}\!\left(  c\right)  <\operatorname{cosocle}\!\left(
c^{k}-1\right)  =\prod_{\operatorname*{ord}_{p}(c)|k}p^{f_{p}+w_{p}-1}.
\]
In particular, taking $a_{p}=f_{p}+w_{p}-1$ yields $\left(  iii\right)  $.

Now suppose there is a prime $p>\operatorname{rad}(c)$ such that
$\operatorname*{ord}_{p}(c)$ divides $k$ and either $f_{p}\geq2$ or $w_{p}%
\geq1$. Note that $f_{p}\geq1$ for each prime $p$ such that
$\operatorname*{ord}_{p}(c)$ divides $k$. Consequently, if $f_{p}\geq2$ or
$w_{p}\geq1$, then $f_{p}+w_{p}\geq2$ and thus $p^{2}$ divides $c^{k}-1$. Then
$\left(  1,c^{k}-1,c^{k}\right)  $ is an $abc$ triple by Theorem
\ref{mainthm1} since $\operatorname{cosocle}\!\left(  p^{2}\right)
=p>\operatorname{rad}\!\left(  c\right)  $.

Next, suppose that there is a prime $p<\operatorname{rad}(c)$ such that
$\operatorname*{ord}_{p}(c)$ divides $k$ and $f_{p}+w_{p}-1\geq m_{p}$. Then
$p^{m_{p}+1}$ divides $c^{k}-1$ and%
\[
\operatorname{cosocle}\!\left(  p^{m_{p}+1}\right)  =p^{m_{p}}%
>\operatorname{rad}\!\left(  c\right)  .
\]
By Theorem \ref{mainthm1}, we deduce that $\left(  1,c^{k}-1,c^{k}\right)  $
is an $abc$ triple.

Lastly, suppose that for each prime $p$ such that $\operatorname*{ord}_{p}(c)$
divides $k$, there exists a positive integer $a_{p}\leq f_{p}+w_{p}-1$ such
that $\prod_{\operatorname*{ord}_{p}(c)|k}p^{a_{p}}>\operatorname{rad}(c)$.
Then $\prod_{\operatorname*{ord}_{p}(c)|k}p^{a_{p}+1}$ divides $c^{k}-1$ and
the result now follows by Theorem \ref{mainthm1} since%
\[
\operatorname{cosocle}\!\left(  \prod_{\operatorname*{ord}_{p}(c)|k}%
p^{a_{p}+1}\right)  =\prod_{\operatorname*{ord}_{p}(c)|k}p^{a_{p}%
}>\operatorname{rad}\!\left(  c\right)  . \qedhere
\]

\end{proof}

As an illustration, consider $c=21$ and $k=12$. In the discussion following
Corollary \ref{CorGran}, we noted that $w_{2}=f_{2}=2$. Moreover, for each
prime $p\neq2$ appearing in Equation (\ref{factp}) we have that $f_{p}=1$ and $w_{p}%
=0$. In particular, we see that statements $\left(  i\right)  $ and $\left(
ii\right)  $ of Theorem \ref{gransuggestion} are not satisfied for each prime
$p$ appearing in Equation (\ref{factp}). We also have that statement $\left(
iii\right)  $ is not satisfied as the only prime for which $f_{p}+w_{p}-1>0$
is $p=2$ and $2^{f_{2}+w_{2}-1}=8<\operatorname{rad}\!\left(  21\right)  $.
It follows that $\left(  1,21^{12}-1,21^{12}\right)  $ is not an $abc$ triple.
In the next section, we will see that $21$ is the first odd integer $n>1$ for
which $\left(  1,n^{\varphi(n)}-1,n^{\varphi(n)}\right)  $ is not an $abc$
triple, where $\varphi(n)$ denotes the Euler-totient function. We note that
$\varphi\!\left(  21\right)  =12$.

\section{Consequences\label{section3}}

In this section, we consider various consequences of Theorems~\ref{mainthm1}
and~\ref{mainthm2}. From these consequences, we deduce the sequences of $abc$
triples that were mentioned in the introduction. We note that this article
began as an investigation of the following question: for what positive odd
integers $n$ is $\left(  1,n^{\varphi(n)}-1,n^{\varphi(n)}\right)  $ an $abc$
triple? Here $\varphi\!\left(  n\right)  $ denotes the Euler-totient function.
The question was motivated by the following observation: if $n$ is an odd
integer such that $3\leq n\leq99$, then $\left(  1,n^{\varphi(n)}%
-1,n^{\varphi(n)}\right)  $ is an $abc$ triple for each $n$ except
$n=21,39,69,$ and $87$. The fact that the four exceptions are composites is no
surprise, as the answer to the question is true for odd primes $n$ \cite{MR4474850}. Our
investigation of this phenomenon led to our Theorems~\ref{mainthm1}
and~\ref{mainthm2}, and our first consequence provides necessary conditions
for when $\left(  1,n^{\varphi(n)}-1,n^{\varphi(n)}\right)  $ is an $abc$
triple for a positive odd integer $n$. To prove this result, we first recall
the following result from elementary number theory.

\begin{lemma}
\label{Lemmaon2k}Let $n$ be a positive odd integer. Then $n^{2^{k}}%
\equiv1\ \operatorname{mod}2^{k+2}$ for each positive integer $k$.
\end{lemma}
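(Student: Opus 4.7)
The plan is to prove this by induction on $k$, which is the standard approach for lifting-the-exponent style statements about powers of $2$ dividing $n^{2^k}-1$.

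For the base case $k=1$, I would write the odd integer as $n = 2m+1$ and expand $n^2 = 4m(m+1) + 1$. Since one of $m, m+1$ is even, the product $m(m+1)$ is divisible by $2$, so $4m(m+1)$ is divisible by $8$, giving $n^2 \equiv 1 \pmod{2^{3}}$, which is precisely the statement for $k=1$.

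For the inductive step, assume that $n^{2^{k}} \equiv 1 \pmod{2^{k+2}}$, and write $n^{2^{k}} = 1 + 2^{k+2}t$ for some integer $t$. Squaring both sides yields
\[
n^{2^{k+1}} = \left(1 + 2^{k+2}t\right)^{2} = 1 + 2^{k+3}t + 2^{2k+4}t^{2} = 1 + 2^{k+3}\!\left(t + 2^{k+1}t^{2}\right).
\]
Since $k \geq 1$, we have $2k+4 \geq k+3$, so both nonconstant terms are divisible by $2^{k+3}$, and therefore $n^{2^{k+1}} \equiv 1 \pmod{2^{k+3}}$, completing the induction.

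There is no real obstacle here; the only thing to be careful about is that the squaring in the inductive step must genuinely lift the power of $2$ by one, which is exactly what the exponent bookkeeping $(k+2) \mapsto (k+3)$ records. I would not expect to need any auxiliary lemma beyond the fact that the product of two consecutive integers is even, and the proof should take only a few lines.
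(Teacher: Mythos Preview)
Your induction argument is correct and is the standard textbook approach to this lemma. The paper, however, takes a different route: it writes $n=2m+1$ and expands $(2m+1)^{2^{k}}$ directly via the Binomial Theorem for arbitrary $k$, grouping the $j=1$ and $j=2$ terms as $2^{k+1}m\bigl(1+(2^{k}-1)m\bigr)$ and observing that this product is always even (so divisible by $2^{k+2}$), while the remaining terms $\binom{2^{k}}{j}(2m)^{j}$ for $j\geq 3$ are each divisible by $2^{k+2}$. Your induction is arguably cleaner, since it avoids any analysis of the $2$-adic valuation of binomial coefficients and reduces everything to a single squaring; the paper's one-shot expansion, on the other hand, is more in the spirit of the surrounding arguments (Lemmas~\ref{lemmanew} and~\ref{lemmagran1}), which repeatedly use the Binomial Theorem to control $p$-adic valuations term by term. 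Either proof is perfectly acceptable here.
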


\begin{proof}
Since $n$ is odd, there is an integer $m$ such that $n=2m+1$. By the Binomial
Theorem,
{\small
\[ 
n^{2^{k}}=\left(  2m+1\right)  ^{2^{k}}=\sum_{j=0}^{2^{k}}\binom{2^{k}}%
{j}\left(  2m\right)  ^{j}=1+2^{k+1}m\left(  1+\left(  2^{k}-1\right)
m\right)  +\sum_{j=3}^{2^{k}}\binom{2^{k}}{j}\left(  2m\right)  ^{j}.
\]}
Now observe that $m\left(  1+\left(  2^{k}-1\right)  m\right)  $ is always
even and $\binom{2^{k}}{j}\left(  2m\right)  ^{j}$ is divisible by $2^{k+2}$
for $3\leq j\leq2^{k}$. Consequently, $n^{2^{k}}\equiv1\ \operatorname{mod}2^{k+2}$.
\end{proof}

With this result, we obtain our first application of Theorem~\ref{mainthm1}.

\begin{corollary}
\label{coreulerphi}Let $n>1$ be an odd integer and let $\varphi$ denote the
Euler-totient function. Set $d=\gcd(n-1,\varphi(n))$ and $m=2^{v_{2}%
(4\varphi(n))-2v_{2}(d)}d^{2}$. If $\operatorname{cosocle}%
(m)>\operatorname{rad}(n)$, then $\left(  1,n^{\varphi(n)k}-1,n^{\varphi
(n)k}\right)  $ is an $abc$ triple for each positive integer $k$.
\end{corollary}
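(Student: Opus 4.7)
The plan is to apply Theorem \ref{mainthm1} with $c = n^{\varphi(n)}$. Since $\operatorname{rad}(n^{\varphi(n)}) = \operatorname{rad}(n)$, the hypothesis $\operatorname{cosocle}(m) > \operatorname{rad}(n)$ is exactly the cosocle–radical inequality required by that theorem. The entire argument therefore reduces to verifying the single divisibility $m \mid n^{\varphi(n)} - 1$, after which Theorem \ref{mainthm1} delivers $(1, n^{\varphi(n)k} - 1, n^{\varphi(n)k})$ as an $abc$ triple for every positive integer $k$.

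To unpack $m$, I would first write $d = 2^{v_2(d)} d'$ with $d'$ odd. The factor $2^{2 v_2(d)}$ inside $d^2$ then cancels against the $2^{-2 v_2(d)}$ in the prefactor of $m$, yielding the coprime factorization
\[
m = 2^{v_2(4\varphi(n))} \cdot (d')^2.
\]
Since $\gcd(2, d') = 1$, it suffices to establish the two divisibilities $(d')^2 \mid n^{\varphi(n)} - 1$ and $2^{v_2(4\varphi(n))} \mid n^{\varphi(n)} - 1$ separately.

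For the odd part, fix an odd prime $p \mid d$; then $p \mid n - 1$, so $\operatorname{ord}_p(n) = 1$ (and $p \nmid n$ since $\gcd(n, n-1) = 1$), and $p \mid \varphi(n)$. Applying Lemma \ref{lemmagran1} with $c = n$ and $k = \varphi(n)$ gives $f_p = v_p(n-1)$ and $w_p = v_p(\varphi(n))$, so
\[
v_p\!\left(n^{\varphi(n)} - 1\right) = v_p(n-1) + v_p(\varphi(n)) \geq 2\min\!\left(v_p(n-1), v_p(\varphi(n))\right) = 2 v_p(d),
\]
which is exactly $v_p((d')^2)$. For the $2$-part, I would write $\varphi(n) = 2^s t$ with $s = v_2(\varphi(n))$ and $t$ odd; Lemma \ref{Lemmaon2k} applied to the odd integer $n^t$ then yields $(n^t)^{2^s} \equiv 1 \pmod{2^{s+2}}$, and $s + 2 = v_2(4\varphi(n))$, as needed.

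The one step requiring care is purely notational: the exponent $v_2(4\varphi(n)) - 2 v_2(d)$ appearing in the definition of $m$ can be negative (for instance when $n = 17$, where $v_2(d) = 4$ and $v_2(4\varphi(n)) = 6$), so one must verify that the $d^2$ factor absorbs enough powers of $2$ to make $m$ an honest positive integer and that the total $2$-adic exponent aligns precisely with the $s + 2$ supplied by Lemma \ref{Lemmaon2k}. The clean rewrite $m = 2^{v_2(4\varphi(n))} (d')^2$ makes both of these bookkeeping points transparent, after which no genuine obstacle remains.
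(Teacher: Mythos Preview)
Your proof is correct and follows the same overall strategy as the paper: rewrite $m = 2^{v_2(\varphi(n))+2}(d')^{2}$, verify the $2$-part via Lemma~\ref{Lemmaon2k}, verify the odd part, and invoke Theorem~\ref{mainthm1}. The one genuine difference is in how the odd-part divisibility is established. You appeal to Lemma~\ref{lemmagran1} prime by prime to get $v_p(n^{\varphi(n)}-1)=v_p(n-1)+v_p(\varphi(n))\ge 2v_p(d)$; the paper instead uses the elementary observation that $n\equiv 1\pmod d$ forces $P=\sum_{j=0}^{\varphi(n)-1}n^{j}\equiv \varphi(n)\equiv 0\pmod d$, so that $d^{2}\mid (n-1)P=n^{\varphi(n)}-1$ in one stroke. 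Your route imports the lifting-the-exponent machinery and actually yields sharper information (the exact $p$-adic valuation), while the paper's geometric-series trick is lighter and self-contained. Either way the conclusion is the same, and your explicit handling of the possibly negative exponent $v_2(4\varphi(n))-2v_2(d)$ via the rewrite $m=2^{v_2(4\varphi(n))}(d')^{2}$ is a clean touch that the paper leaves implicit.
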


\begin{proof}
Let $P=\sum_{j=0}^{\varphi(n)-1}n^{j}$ and observe that $n^{\varphi (n)}-1=\left(  n-1\right)  P$. 
Since $d=\gcd\!\left(  n-1,\varphi\!\left(  n\right)  \right)  $ divides $n-1$, we have that $n\equiv
1\ \operatorname{mod}d$ and thus
\[
P\equiv\sum_{j=0}^{\varphi(n)-1}1^{j}\ \operatorname{mod}d=\varphi\!\left(
n\right)  \ \operatorname{mod}d.
\]
In particular, $d$ divides $P$. Since $n^{\varphi(n)}-1=\left(  n-1\right)  P$, we
deduce that $d^{2}$ divides $n^{\varphi(n)}-1$. 

Next, write $\varphi(n) =2^{v_{2}(\varphi(n))}r $ for $r$ an odd integer. By Lemma \ref{Lemmaon2k},%
\[
n^{\varphi(n)}-1=\left(  n^{r}\right)  ^{2^{v_{2}(\varphi(n))}}-1\equiv
0\ \operatorname{mod}2^{v_{2}(\varphi(n))+2}.
\]
Hence $2^{v_{2}(\varphi(n))+2}$ divides $n^{\varphi(n)}-1$. It follows that%
\[
2^{v_{2}(\varphi(n))+2}\frac{d^{2}}{2^{v_{2}(d^{2})}}=2^{v_{2}(4\varphi
(n))-2v_{2}(d)}d^{2}=m
\]
divides $n^{\varphi(n)}-1$. The result now follows from Theorem~\ref{mainthm1}.
\end{proof}

As an illustration, let $n=75$. Then with notation as in Corollary
\ref{coreulerphi}, we observe that $\varphi\!\left(  75\right)=40,\ d=2,$
and $m=32$. Since $\operatorname{cosocle}(32)=16>\operatorname{rad}(75)=15$,
we have that $\left(  1,75^{40k}-1,75^{40k}\right)  $ is an $abc$ triple for
each positive integer $k$. We note that the converse to Corollary
\ref{coreulerphi} does not hold. In fact, if $3\leq n\leq99$ is an odd integer
such that $\left(  1,n^{\varphi(n)}-1,n^{\varphi(n)}\right)  $ is an $abc$
triple, then the corollary fails to show the cases corresponding to
$n=33,35,55,57,63,65,77,93,95,$ and $99$. The following result provides an
improvement, but comes at the cost of having to compute $v_{p}(n^{\varphi
(n)}-1)$ for each prime $p$ that divides $\gcd(n^{\varphi(n)}-1,\varphi(n))$.

\begin{corollary}
\label{coreulerphi1}Let $n>1$ be an integer and let $\varphi$ denote the
Euler-totient function. Set $d=\gcd(n^{\varphi(n)}-1,\varphi(n))$ and%
\[
m=\prod_{p|d}p^{v_{p}(n^{\varphi(n)}-1)}.
\]
If $\operatorname{cosocle}(m)>\operatorname{rad}(n)$, then $\left(
1,n^{\varphi(n)k}-1,n^{\varphi(n)k}\right)  $ is an $abc$ triple for each
positive integer $k$.
\end{corollary}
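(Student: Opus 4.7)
The plan is to reduce directly to Theorem~\ref{mainthm1} by taking $c = n^{\varphi(n)}$. To apply that theorem I need to verify two things: that $m$ divides $c - 1 = n^{\varphi(n)} - 1$, and that $\operatorname{cosocle}(m) > \operatorname{rad}(c)$. The second is essentially the hypothesis of the corollary, once I observe via Lemma~\ref{elemlemma} that $\operatorname{rad}(n^{\varphi(n)}) = \operatorname{rad}(n)$. So the only real content is the divisibility $m \mid n^{\varphi(n)} - 1$.

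That divisibility is immediate from the definition of $m$. By construction, $m = \prod_{p \mid d} p^{v_p(n^{\varphi(n)} - 1)}$ is a product over pairwise distinct primes $p$; every such $p$ divides $d$, hence divides $n^{\varphi(n)} - 1$, so each exponent $v_p(n^{\varphi(n)} - 1)$ is at least $1$, and each prime power $p^{v_p(n^{\varphi(n)} - 1)}$ exactly divides $n^{\varphi(n)} - 1$ by the definition of the $p$-adic valuation. Pairwise coprimality of these prime powers then gives $m \mid n^{\varphi(n)} - 1$, and Theorem~\ref{mainthm1} applied to $c = n^{\varphi(n)}$ yields that $\bigl(1,\, (n^{\varphi(n)})^k - 1,\, (n^{\varphi(n)})^k\bigr) = \bigl(1,\, n^{\varphi(n)k} - 1,\, n^{\varphi(n)k}\bigr)$ is an $abc$ triple for every positive integer $k$.

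There is effectively no obstacle here: all the content has already been packaged into Theorem~\ref{mainthm1}, and the work amounts to choosing the right divisor $m$. The improvement over Corollary~\ref{coreulerphi} comes precisely from this choice: the previous corollary only extracted an explicit lower bound on the $2$-part together with $d^2$, whereas here we take the full $p$-part of $n^{\varphi(n)} - 1$ for every prime $p$ dividing $d$. The resulting $m$ divides $n^{\varphi(n)} - 1$ by at least as large a factor, so its cosocle is at least as large, and in practice strictly larger, which is what allows the refined hypothesis $\operatorname{cosocle}(m) > \operatorname{rad}(n)$ to catch the additional values of $n$ (such as $33, 35, 55, \ldots$) that Corollary~\ref{coreulerphi} missed.
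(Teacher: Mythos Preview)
Your proof is correct and matches the paper's own argument: both simply observe that $m$ divides $n^{\varphi(n)}-1$ by construction and invoke Theorem~\ref{mainthm1} with $c=n^{\varphi(n)}$ (using $\operatorname{rad}(n^{\varphi(n)})=\operatorname{rad}(n)$). One small inaccuracy in your closing commentary: the refined corollary does \emph{not} handle $n=55$ (nor $n=57$); as the paper notes, for $n=55$ one has $m=2^{v_2(55^{40}-1)}=64$ and $\operatorname{cosocle}(64)=32<55=\operatorname{rad}(55)$, so the hypothesis still fails there.
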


\begin{proof}
The result follows from
Theorem~\ref{mainthm1} since $m$ divides $n^{\varphi(n)}-1$.
\end{proof}

For odd integers $n$ such that $3\leq n\leq99$, Corollary~\ref{coreulerphi1}
allows us to conclude that for $n\neq21,39,55,57,69,$ and $87$, $\left(
1,n^{\varphi(n)k}-1,n^{\varphi(n)k}\right)  $ is an $abc$ triple for each
positive integer $k$. As noted at the start of the section, $n=21,39,69,$ and
$87$ are the only $n$'s in this range for which $\left(  1,n^{\varphi
(n)}-1,n^{\varphi(n)}\right)  $ is not an $abc$ triple. In particular,
Corollary~\ref{coreulerphi1} fails to show the cases corresponding to $n=55,57$. Indeed, when
$n=55$, we have that $\varphi\!\left(  55\right)  =40$ and $\gcd\!\left(
55^{40}-1,40\right)  =8$. Then $m=2^{v_{2}(55^{40}-1)}=64$, and thus
$\operatorname{cosocle}(64)=32<\operatorname{rad}\!\left(  55\right)  =55$.
Consequently, the assumption of Corollary \ref{coreulerphi1} is not satisfied
in the case when $n=55$. We note that $\operatorname{cosocle}\!\left(
55^{40}-1\right)  =288$, and hence $\left(  1,55^{40k}-1,55^{40k}\right)  $ is
an $abc$ triple for each positive integer $k$ by Proposition \ref{abcprop}.
The failure of Corollaries~\ref{coreulerphi} and \ref{coreulerphi1} in the
$n=55$ case stems from the fact that the primes dividing $m$ must divide
$\varphi(n)$. Indeed, $\operatorname{cosocle}\!\left(  55^{40}-1\right)
=32\cdot9$ and $3\nmid\varphi(55)$.

To state our next result, we recall the Carmichael function $\lambda
:\mathbb{N}\rightarrow\mathbb{N}$, which has the property that $\lambda
\!\left(  m\right)  $ is the least positive integer for which $a^{\lambda
(m)}\equiv1\ \operatorname{mod}m$ for each integer $a$ that is relatively
prime to $m$. In particular, $\lambda\!\left(  m\right)  $ divides
$\varphi\!\left(  m\right)  $.

\begin{corollary}
\label{CarThm}Let $\lambda$ and $\varphi$ denote the Carmichael function and
Euler-totient function, respectively. If $m$ and $n$ are relatively prime
positive integers such that $\operatorname{cosocle}(m)>\operatorname{rad}%
(n)>1$, then $\left(  1,n^{\lambda(m)k}-1,n^{\lambda(m)k}\right)  $ and
$\left(  1,n^{\varphi(m)k}-1,n^{\varphi(m)k}\right)  $ are $abc$ triples for
each positive integer $k$.
\end{corollary}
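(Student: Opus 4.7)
The plan is to reduce both assertions to a single application of Theorem~\ref{mainthm1}, with the only substantive point being that raising $n$ to the power $\lambda(m)$ (or $\varphi(m)$) does not change its radical, while it does force $m$ to divide the resulting integer.

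First I would set $c_{1}=n^{\lambda(m)}$ and $c_{2}=n^{\varphi(m)}$. Since $\operatorname{rad}(n)>1$ we have $n>1$, and since $\operatorname{cosocle}(m)>1$ we have $m>1$, so $\lambda(m),\varphi(m)\geq 1$ and therefore $c_{1},c_{2}>1$. By the last clause of Lemma~\ref{elemlemma}, $\operatorname{rad}(c_{1})=\operatorname{rad}(n^{\lambda(m)})=\operatorname{rad}(n)$, and similarly $\operatorname{rad}(c_{2})=\operatorname{rad}(n)$. Hence the hypothesis $\operatorname{cosocle}(m)>\operatorname{rad}(n)$ translates directly into
\[
\operatorname{cosocle}(m)>\operatorname{rad}(c_{i}) \qquad (i=1,2).
\]

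Next I would verify that $m\mid c_{i}-1$. Since $\gcd(n,m)=1$, the defining property of the Carmichael function gives $n^{\lambda(m)}\equiv 1\pmod{m}$, so $m\mid c_{1}-1$. For $c_{2}$, either invoke Euler's theorem ($n^{\varphi(m)}\equiv 1\pmod m$) or use that $\lambda(m)\mid\varphi(m)$, which implies $c_{1}\mid c_{2}-1\cdot(\text{geometric sum})$ in the standard way; either argument gives $m\mid c_{2}-1$.

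With both hypotheses of Theorem~\ref{mainthm1} in hand, I would apply that theorem to $c=c_{1}$ to conclude that $(1,c_{1}^{k}-1,c_{1}^{k})=(1,n^{\lambda(m)k}-1,n^{\lambda(m)k})$ is an $abc$ triple for every positive integer $k$, and analogously with $c=c_{2}$ for the $\varphi(m)$ statement. There is no real obstacle; the only step requiring a moment of care is recognizing that the hypothesis $\operatorname{cosocle}(m)>\operatorname{rad}(n)$ is exactly the hypothesis of Theorem~\ref{mainthm1} after the radical-invariance observation, so no further computation is needed.
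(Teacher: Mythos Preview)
Your proposal is correct and follows essentially the same approach as the paper: both use $\gcd(n,m)=1$ to obtain $m\mid n^{\lambda(m)}-1$ and then invoke Theorem~\ref{mainthm1} with $c=n^{\lambda(m)}$, noting $\operatorname{rad}(c)=\operatorname{rad}(n)$. The only cosmetic difference is that the paper deduces the $\varphi(m)$ case from the $\lambda(m)$ case via the divisibility $\lambda(m)\mid\varphi(m)$ (so that $\varphi(m)k$ is itself a multiple of $\lambda(m)$), whereas you apply Theorem~\ref{mainthm1} a second time with $c=n^{\varphi(m)}$; both routes are immediate.
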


\begin{proof}
Since $n^{\lambda(m)}\equiv1\ \operatorname{mod}m$, we have that $m$ divides $n^{\lambda(m)}-1$. By Theorem \ref{mainthm1}, we have that $\left(  1,n^{\lambda(m)k}-1,n^{\lambda (m)k}\right)  $ is an $abc$ triple for each positive integer $k$. Since $\lambda (m)\mid \varphi(m)$, we also have that $\left(  1,n^{\varphi(m)k}-1,n^{\varphi(m)k}\right)  $ is an $abc$ triple for each positive integer $k$.
\end{proof}

As an example, choose $n=11$ and $m=32$. Then $\operatorname{cosocle}\!\left(32\right)  =16>\operatorname{rad}\!\left(   11\right)  $, and therefore the conditions of Corollary~\ref{CarThm} are satisfied. As a result, we find that $\left(
1,11^{\lambda(32)k}-1,11^{\lambda(32)k}\right)  =(1,11^{8k}-1,11^{8k})$ is a
sequence of $abc$ triples. More generally, we have the following application
of Corollary \ref{CarThm}.
\begin{corollary}
\label{granville-tucker}Let $n>1$ be an integer and let $p$ be an odd prime such that $p>\operatorname{rad}(n).$ Then for each positive integer $k$, $\left(1,n^{p\left(  p-1\right)  k}-1,n^{p\left(  p-1\right)  k}\right)  $ is an $abc$ triple.
\end{corollary}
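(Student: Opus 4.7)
The plan is to apply Corollary \ref{CarThm} with the strategic choice $m = p^{2}$. The reason this is the right pick is that $\varphi(p^{2}) = p(p-1)$, which is exactly the exponent that appears in the statement we want to prove, and simultaneously $\operatorname{cosocle}(p^{2}) = p^{2}/\operatorname{rad}(p^{2}) = p^{2}/p = p$, which is exactly the quantity the hypothesis controls.

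First I would verify the two hypotheses of Corollary \ref{CarThm} for the pair $(n, p^{2})$. For coprimality, since every prime dividing $n$ also divides $\operatorname{rad}(n)$ and $p > \operatorname{rad}(n)$ by assumption, the prime $p$ does not divide $n$, hence $\gcd(n, p^{2}) = 1$. For the cosocle condition, the computation $\operatorname{cosocle}(p^{2}) = p$ together with $p > \operatorname{rad}(n)$ gives $\operatorname{cosocle}(p^{2}) > \operatorname{rad}(n)$. Finally, $\operatorname{rad}(n) > 1$ because $n > 1$.

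Having checked all hypotheses, I would invoke the $\varphi$-statement of Corollary \ref{CarThm} to conclude that $(1, n^{\varphi(p^{2})k} - 1, n^{\varphi(p^{2})k})$ is an $abc$ triple for every positive integer $k$. Since $\varphi(p^{2}) = p(p-1)$, this is precisely the triple $(1, n^{p(p-1)k} - 1, n^{p(p-1)k})$, completing the argument.

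There is essentially no obstacle to overcome here; the content of the corollary lies entirely in recognizing that the hypothesis $p > \operatorname{rad}(n)$ of the statement has been tailored so that $m = p^{2}$ satisfies the cosocle inequality of Corollary \ref{CarThm}, and that the choice $m = p^{2}$ produces the desired exponent via Euler's totient. The proof is therefore a one-line specialization once the correct $m$ is selected.
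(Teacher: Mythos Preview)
Your proof is correct and is essentially identical to the paper's: both set $m=p^{2}$, note $\operatorname{cosocle}(p^{2})=p>\operatorname{rad}(n)$, and apply Corollary~\ref{CarThm}. The only cosmetic difference is that the paper invokes the $\lambda$-statement of Corollary~\ref{CarThm} via $\lambda(p^{2})=p(p-1)$, whereas you use the $\varphi$-statement via $\varphi(p^{2})=p(p-1)$; since $\lambda(p^{2})=\varphi(p^{2})$ for an odd prime $p$, this makes no difference.
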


\begin{proof}
By assumption, $\operatorname{cosocle}\!\left(  p^{2}\right)
=p>\operatorname{rad}(n)$. Moreover, $\lambda\!\left(  p^{2}\right)  =p\left(
p-1\right)  $ since $p$ is prime. It follows from Corollary \ref{CarThm} that
$\left(  1,n^{\lambda(p^{2})k}-1,n^{\lambda(p^{2})k}\right)  =\left(
1,n^{p\left(  p-1\right)  k}-1,n^{p\left(  p-1\right)  k}\right)  $ is an
$abc$ triple for each positive integer $k$.
\end{proof}

Taking $(n,k)=(2,1)$ in Corollary~\ref{granville-tucker} yields that $\left(  1,2^{p\left(  p-1\right)  }-1,2^{p\left(  p-1\right)  }\right)  $ is an $abc$ triple for each odd prime $p$. This result is originally due to Granville and Tucker~\cite{MR1930670}.
Theorem \ref{gransuggestion} gives the following refinement of Corollary~\ref{granville-tucker}.

\begin{corollary}
\label{corg6}Let $n>1$ be an integer and let $p$ be an odd prime such that
$p>\operatorname{rad}(n)$. Then for each positive integer $k$, $\left(
1,n^{p\operatorname*{ord}_{p}(n)k}-1,n^{p\operatorname*{ord}_{p}(n)k}\right)
$ is an $abc$ triple. In particular, if $n\equiv1\ \operatorname{mod}p$ and
$p>\operatorname{rad}\!\left(  n\right)  $, then $\left(  1,n^{pk}%
-1,n^{pk}\right)  $ is an $abc$ triple for each positive integer $n$.
\end{corollary}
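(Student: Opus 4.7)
The plan is to reduce this corollary to an easy application of Theorem~\ref{gransuggestion}(i), or equivalently to Theorem~\ref{mainthm1} combined with the valuation formula of Lemma~\ref{lemmagran1}. Before invoking either, I would first observe that since $p$ is prime and $p>\operatorname{rad}(n)$, the prime $p$ does not divide $n$; in particular, $\operatorname*{ord}_{p}(n)$ is well defined, and by Fermat's Little Theorem it divides $p-1$, so it is coprime to $p$.

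Set $K=p\operatorname*{ord}_{p}(n)k$. In the notation of Corollary~\ref{CorGran} applied to $c=n$, the prime $p$ satisfies $\operatorname*{ord}_{p}(n)\mid K$, and $w_{p}=v_{p}(K)\geq 1$ because $p\mid K$. Since $p>\operatorname{rad}(n)$ and $w_{p}\geq 1$, condition $(i)$ of Theorem~\ref{gransuggestion} is met, so $(1,n^{K}-1,n^{K})$ is an $abc$ triple. Alternatively, if one prefers a direct appeal to Theorem~\ref{mainthm1}, I would put $c_{0}=n^{p\operatorname*{ord}_{p}(n)}$ and $m=p^{2}$: Lemma~\ref{lemmagran1} gives $v_{p}(c_{0}-1)=f_{p}+w_{p}\geq 1+1=2$, so $p^{2}\mid c_{0}-1$, and since $\operatorname{cosocle}(p^{2})=p>\operatorname{rad}(n)=\operatorname{rad}(c_{0})$, Theorem~\ref{mainthm1} yields that $(1,c_{0}^{k}-1,c_{0}^{k})=(1,n^{p\operatorname*{ord}_{p}(n)k}-1,n^{p\operatorname*{ord}_{p}(n)k})$ is an $abc$ triple for every positive integer $k$.

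For the ``in particular'' clause, if $n\equiv 1\ \operatorname{mod} p$, then $\operatorname*{ord}_{p}(n)=1$, so $p\operatorname*{ord}_{p}(n)=p$, and the general statement specializes to $(1,n^{pk}-1,n^{pk})$ being an $abc$ triple for each positive integer $k$.

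There is no real obstacle here; all the heavy lifting has already been carried out in Lemma~\ref{lemmagran1} and Theorem~\ref{gransuggestion} (or Theorem~\ref{mainthm1}). The only thing one must be careful about is to justify that $p\nmid n$ (so that $\operatorname*{ord}_{p}(n)$ exists) and to verify $w_{p}\geq 1$, both of which follow immediately from the hypotheses.
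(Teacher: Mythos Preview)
Your proof is correct and follows essentially the same route as the paper: both invoke Theorem~\ref{gransuggestion}(i) by checking that $\operatorname*{ord}_{p}(n)\mid K$ and $w_{p}\geq 1$, with the ``in particular'' clause handled identically via $\operatorname*{ord}_{p}(n)=1$. The only cosmetic difference is that the paper applies Theorem~\ref{gransuggestion} to the base exponent $p\operatorname*{ord}_{p}(n)$ (where $w_{p}=1$ exactly) and then passes to general $k$ via Corollary~\ref{cortri}, whereas you apply it directly to $K=p\operatorname*{ord}_{p}(n)k$; your extra care in justifying $p\nmid n$ and your alternative route through Theorem~\ref{mainthm1} with $m=p^{2}$ are both sound.
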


\begin{proof}
In the notation of Theorem \ref{gransuggestion}, we have that $w_{p}%
=v_{p}\!\left(  p\operatorname*{ord}_{p}\!\left(  n\right)  \right)  =1$.
Since $p>\operatorname{rad}\!\left(  n\right)  $ and $\operatorname*{ord}%
_{p}\!\left(  n\right)  $ divides $p\operatorname*{ord}_{p}\!\left(  n\right)
$, Theorem \ref{gransuggestion} $\left(  i\right)  $ implies that $\left(
1,n^{p\operatorname*{ord}_{p}(n)}-1,n^{p\operatorname*{ord}_{p}(n)}\right)  $
is an $abc$ triple. The result now follows by Corollary~\ref{cortri}. The
second statement is automatic since if $n\equiv1\ \operatorname{mod}p$,
then $\operatorname*{ord}_{p}\!\left(  n\right)  =1$.
\end{proof}

As a demonstration, let $n=16$ and $p=5$. Then Corollary \ref{corg6} asserts
that $\left(  1,16^{5k}-1,16^{5k}\right)  $ is an $abc$ triple for each
positive integer $k$.

\begin{corollary}
\label{maincor1}Let $n>1$ be an integer that is either odd or even and
non-squarefree. Then $\left(  1,n^{\left(  n-1\right)  k}-1,n^{\left(
n-1\right)  k}\right)  $ is an $abc$ triple for each positive integer~$k$.
\end{corollary}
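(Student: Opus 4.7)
The plan is to apply Theorem~\ref{mainthm1} with $c = n^{n-1}$, so that the triples $(1,c^k-1,c^k)$ become precisely $(1,n^{(n-1)k}-1,n^{(n-1)k})$ and the required inequality simplifies to $\operatorname{cosocle}(m) > \operatorname{rad}(n^{n-1}) = \operatorname{rad}(n)$. The task thus reduces to exhibiting, under each of the two hypotheses on $n$, a divisor $m$ of $n^{n-1}-1$ whose cosocle exceeds $\operatorname{rad}(n)$.

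The common starting point is the binomial expansion $n^{n-1} = ((n-1)+1)^{n-1}$, which, after collecting the $j=1$ and $j \geq 2$ terms, yields $(n-1)^2 \mid n^{n-1}-1$. When $n$ is even and non-squarefree, I will take $m = (n-1)^2$. Its cosocle is at least $n-1$ (since $\operatorname{rad}(n-1) \leq n-1$), while the existence of a repeated prime factor of $n$ forces $\operatorname{rad}(n) \leq n/2$; since $n-1 > n/2$ for $n \geq 3$, this closes the even non-squarefree case.

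The odd case is more delicate because $n$ may be squarefree, so $\operatorname{rad}(n) = n$, and $(n-1)^2$ alone need not suffice (for example $n = 3$ gives $\operatorname{cosocle}(4) = 2 < 3$). I remedy this by absorbing the extra $2$-adic divisibility supplied by Lemma~\ref{Lemmaon2k}. Writing $n - 1 = 2^a b$ with $a \geq 1$ and $b$ odd, the lemma applied to the odd integer $n^b$ with exponent $2^a$ gives $v_2(n^{n-1}-1) \geq a + 2$. Combined with $b^2 \mid (n-1)^2 \mid n^{n-1}-1$ and $\gcd(2,b) = 1$, the divisor $m := 2^{a+2} b^2$ divides $n^{n-1}-1$. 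A short computation of $\operatorname{rad}(m) = 2\operatorname{rad}(b)$ then gives $\operatorname{cosocle}(m) = 2^{a+1} b^2/\operatorname{rad}(b) \geq 2^{a+1} b = 2(n-1)$, which exceeds $n \geq \operatorname{rad}(n)$ for all $n \geq 3$. Theorem~\ref{mainthm1} completes the argument in both cases. The main obstacle is precisely this odd squarefree subcase, where the binomial divisibility alone falls short and one must overlay the $2$-adic lift from Lemma~\ref{Lemmaon2k}; once that step is in hand, the cosocle estimate collapses to the elementary inequality $2(n-1) > n$.
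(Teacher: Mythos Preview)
Your proof is correct and follows essentially the same strategy as the paper: apply Theorem~\ref{mainthm1} with $c=n^{n-1}$, establish $(n-1)^2\mid n^{n-1}-1$, and then treat the odd and even non-squarefree cases separately, arriving in the odd case at the decisive bound $\operatorname{cosocle}\geq 2(n-1)>n\geq\operatorname{rad}(n)$.

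The only differences are cosmetic. First, you exhibit explicit small divisors $m$ (namely $(n-1)^2$ and $2^{a+2}b^2$), whereas the paper takes $m=n^{n-1}-1$ itself and bounds its cosocle from below. Second, in the odd case you extract the extra power of $2$ by invoking Lemma~\ref{Lemmaon2k} to obtain $v_2(n^{n-1}-1)\geq a+2$; the paper instead writes $n^{n-1}-1=(n-1)P$ with $P=\sum_{j=0}^{n-2}n^j$ and shows directly, by splitting on $n\bmod 4$ and reducing $P$ modulo $n-1$ or $n+1$, that $4\mid P$. Since $v_2(P)\geq 2$ is equivalent to $v_2(n^{n-1}-1)\geq a+2$, the two arguments prove the same fact. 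Your route has the advantage of reusing an already-stated lemma rather than redoing a case split; the paper's route is slightly more self-contained.
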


\begin{proof}
Let $P=\sum_{j=0}^{n-2}n^{j}$ and observe that $n^{n-1}-1=\left(  n-1\right)
P$. Moreover,
\[
P\equiv\sum_{j=0}^{n-2}\left(  1\right)  ^{j}\ \operatorname{mod}\!\left(
n-1\right)  =0\ \operatorname{mod}\left(  n-1\right)  .
\]
In particular, $\left(  n-1\right)  ^{2}$ divides $n^{n-1}-1$ and thus
\[
\operatorname{rad}(n^{n-1}-1)=\operatorname{rad}\!\left(  \frac{n^{n-1}%
-1}{n-1}\right)  .
\]

Now suppose that $n$ is odd. We claim that $4$ divides $P$. If $n\equiv
1\ \operatorname{mod}4$, then this follows since~$P$ is divisible by $n-1$. So
suppose that $n\equiv3\ \operatorname{mod}4$. Then $4$ divides $n+1$, and hence $4$ divides $P$ since
\[
P\equiv\sum_{j=0}^{n-2}\left(  -1\right)  ^{j}\ \operatorname{mod}\!\left(
n+1\right)  =0\ \operatorname{mod}\left(  n+1\right)  .
\]
Consequently,
\begin{equation}
\operatorname{rad}(n^{n-1}-1)=\operatorname{rad}\!\left(  \frac{n^{n-1}%
-1}{2\left(  n-1\right)  }\right)  \leq\frac{n^{n-1}-1}{2\left(  n-1\right)
}.\label{coreq1}%
\end{equation}
Now observe that by Equation (\ref{coreq1}),%
\[
\operatorname{cosocle}(n^{n-1}-1)=\frac{n^{n-1}-1}{\operatorname{rad}%
(n^{n-1}-1)}\geq2\left(  n-1\right)  >\operatorname{rad}(n).
\]
The claim now follows by Theorem \ref{mainthm1} with $m=n^{n-1}-1$.

Lastly, suppose that $n$ is an even non-squarefree positive integer. Then
$n=a^{2}b$ for some positive integers $a$ and $b$ with $a>1$ and $b$
squarefree. Then $\operatorname{rad}(n)=\operatorname{rad}(ab)\leq ab<n-1$.
Since $\operatorname{rad}(n^{n-1}-1)=\operatorname{rad}\!\left(  \frac
{n^{n-1}-1}{n-1}\right)  \leq\frac{n^{n-1}-1}{n-1}$, we deduce that%
\[
\operatorname{cosocle}(n^{n-1}-1)=\frac{n^{n-1}-1}{\operatorname{rad}%
(n^{n-1}-1)}\geq n-1>\operatorname{rad}\!\left(  n\right)  .
\]
The result follows by Theorem \ref{mainthm1} with $m=n^{n-1}-1$.
\end{proof}
From Corollary~\ref{maincor1}, we recover that $\left(1,9^{k}-1,9^{k}\right)=\left(1,3^{2k}-1,3^{2k}\right)$ is a sequence of $abc$ triples. In particular, we obtain the smallest $abc$ triple $\left(1,8,9\right)$ as a special case.
 Taking $n=8$ in Corollary~\ref{maincor1} gives us the sequence of $abc$ triples $\left(1,8^{7k}-1,8^{7k}\right)$, which generalizes the sequence $\left(  1,8^{7^{k}}-1,8^{7^{k}}\right)  $ that appears in~\cite{MR3584566}.
 
\begin{corollary}
\label{maincor2}Let $n>1$ be an integer. Then $\left(  1,n^{\left(
n+1\right)  k}-1,n^{\left(  n+1\right)  k}\right)  $ is an $abc$ triple
whenever $\left(  n+1\right)  k$ is a positive even integer.
\end{corollary}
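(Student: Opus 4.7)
The plan is to locate a divisor $m$ of $n^{(n+1)k}-1$ whose cosocle exceeds $\operatorname{rad}(n)$ and then invoke Theorem~\ref{mainthm1}. The natural candidate is $m=(n+1)^{2}$, since once the divisibility is in hand the cosocle bound is nearly automatic: one has $\operatorname{cosocle}((n+1)^{2})=(n+1)^{2}/\operatorname{rad}(n+1)\geq n+1>n\geq\operatorname{rad}(n)$.

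The heart of the proof is therefore the divisibility $(n+1)^{2}\mid n^{(n+1)k}-1$. I would prove this by writing $n=(n+1)-1$ and applying the binomial theorem:
\[
n^{(n+1)k}=\bigl((n+1)-1\bigr)^{(n+1)k}=\sum_{j=0}^{(n+1)k}\binom{(n+1)k}{j}(n+1)^{j}(-1)^{(n+1)k-j}.
\]
All terms with $j\geq 2$ are divisible by $(n+1)^{2}$, and the $j=1$ term is $(n+1)k\cdot(n+1)\cdot(-1)^{(n+1)k-1}$, which already carries two factors of $n+1$. Only the $j=0$ term survives modulo $(n+1)^{2}$, giving $n^{(n+1)k}\equiv(-1)^{(n+1)k}\pmod{(n+1)^{2}}$. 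This is exactly where the parity hypothesis enters: since $(n+1)k$ is even, $(-1)^{(n+1)k}=1$, and hence $(n+1)^{2}\mid n^{(n+1)k}-1$.

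To finish, I would apply Theorem~\ref{mainthm1} with $c=n^{(n+1)k}$, $m=(n+1)^{2}$, and the theorem's exponent equal to $1$. The hypothesis $m\mid c-1$ is the divisibility just established, and the hypothesis $\operatorname{cosocle}(m)>\operatorname{rad}(c)=\operatorname{rad}(n)$ is the inequality above. The theorem then yields that $(1,n^{(n+1)k}-1,n^{(n+1)k})$ is an $abc$ triple.

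There is no genuine obstacle beyond recognizing the right candidate for $m$; the binomial identity used here is simpler than the arithmetic in Corollaries~\ref{coreulerphi} and~\ref{maincor1}, because the extra factor of $n+1$ in $\binom{(n+1)k}{1}=(n+1)k$ is exactly what allows the $j=1$ term to vanish modulo $(n+1)^{2}$. The parity hypothesis on $(n+1)k$ is both necessary and sufficient for the single identity $(-1)^{(n+1)k}=1$, which is the only step where it is used.
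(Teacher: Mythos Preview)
Your proof is correct and follows the same overall strategy as the paper: establish that $(n+1)^{2}$ divides $n^{(n+1)k}-1$ and then invoke Theorem~\ref{mainthm1}. The execution differs in two small ways worth noting. First, the paper obtains the divisibility by writing $n^{l}-1=(n+1)P$ with $P=\sum_{j=0}^{l-1}(-1)^{j+1}n^{j}$ and reducing $P$ modulo $n+1$ (using $n\equiv-1$), which forces a case split on the parity of $n$ (taking $l=n+1$ when $n$ is odd and $l=2(n+1)$ when $n$ is even, then letting the exponent in Theorem~\ref{mainthm1} absorb the remaining factor of $k$); your binomial expansion handles all parities at once and so avoids this split. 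Second, the paper applies Theorem~\ref{mainthm1} with $m=n^{l}-1$ itself, whereas you take the more economical $m=(n+1)^{2}$; both choices satisfy $\operatorname{cosocle}(m)>\operatorname{rad}(n)$, but yours makes the mechanism more transparent. Neither difference is substantive---the underlying idea is identical---but your version is slightly cleaner.
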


\begin{proof}
Let $l$ be a positive even integer and let $P=\sum_{j=0}^{l-1}\left(  -1\right)
^{j+1}n^{j}$. Then $n^{l}-1=\left(  n+1\right)  P$. We now proceed by cases.

\textbf{Case 1.} Suppose that $n$ is a positive even integer and let
$l=2\left(  n+1\right)  $. Since $n \equiv -1\ \operatorname{mod}\!\left(n+1\right)$, we have that $P \equiv \sum_{j=0}^{l-1}\left(  -1\right)
^{j+1} = 0\ \operatorname{mod}\!\left(n+1\right)  $ and thus%
\[
\operatorname{rad}(n^{l}-1)=\operatorname{rad}\!\left(  \frac{n^{l}-1}%
{n+1}\right)  \leq\frac{n^{l}-1}{n+1}.
\]
The claim now holds by Theorem \ref{mainthm1} with $m=n^{l}-1$ since%
\[
\operatorname{cosocle}(n^{l}-1)=\frac{n^{l}-1}{\operatorname{rad}(n^{l}%
-1)}\geq n+1>\operatorname{rad}(n).
\]

\textbf{Case 2.} Suppose that $n$ is a positive odd integer. Then $l=n+1$ is
even and $P\equiv0\ \operatorname{mod}\!\left(  n+1\right)  $. A similar
argument to that of Case $1$ with $m=n^{l}-1$ shows that the result holds by
Theorem~\ref{mainthm1}.
\end{proof}
As an example, choose $n=21$. As a result, $(n+1)k$ is even for every positive integer $k$, and by Corollary~\ref{maincor2} we know that $\left(1,21^{22k}-1,21^{22k}\right)$ is a sequence of $abc$ triples.

\begin{corollary}
\label{coronpowersof2}Let $j\geq2$ be an integer. Then $\left(  1,\left(
2^{j}-1\right)  ^{2k}-1,\left(  2^{j}-1\right)  ^{2k}\right)  $ is an $abc$
triple for each positive integer $k$.
\end{corollary}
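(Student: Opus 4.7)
The plan is to deduce this corollary directly from Theorem~\ref{mainthm1} by choosing $c = (2^j - 1)^2$ and an appropriate divisor $m$ of $c - 1$. Set $n = 2^j - 1$, so that $c = n^2$ and $c^k = n^{2k}$. The factorization
\[
n^2 - 1 = (n-1)(n+1) = \bigl(2^j - 2\bigr)\cdot 2^j = 2^{j+1}\bigl(2^{j-1} - 1\bigr)
\]
immediately exhibits $m = 2^{j+1}$ as a divisor of $c - 1$, which is the first hypothesis required by Theorem~\ref{mainthm1}.

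For the second hypothesis, I would verify the strict inequality $\operatorname{cosocle}(m) > \operatorname{rad}(c)$. Since $n$ is odd, we have $\operatorname{rad}(c) = \operatorname{rad}(n^2) = \operatorname{rad}(n) \leq n = 2^j - 1$, whereas $\operatorname{cosocle}(2^{j+1}) = 2^j$. Hence $\operatorname{cosocle}(m) = 2^j > 2^j - 1 \geq \operatorname{rad}(c)$. Invoking Theorem~\ref{mainthm1} with this choice of $c$ and $m$ yields that $\bigl(1, (n^2)^k - 1, (n^2)^k\bigr) = \bigl(1, (2^j - 1)^{2k} - 1, (2^j - 1)^{2k}\bigr)$ is an $abc$ triple for every positive integer $k$, which is the claim. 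There is no real obstacle in this argument; the only mildly delicate point is that the exponent $2k$ (rather than $k$) in the statement forces us to apply Theorem~\ref{mainthm1} to $c = n^2$ instead of $c = n$, which is necessary because $v_2(n - 1) = 1$ would make the cosocle condition fail for any power of $2$ dividing $n - 1$, whereas squaring doubles the useful $2$-adic valuation.
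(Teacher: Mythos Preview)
Your proof is correct and follows essentially the same approach as the paper: set $c=(2^j-1)^2$, use the factorization $c-1=2^{j+1}(2^{j-1}-1)$, and apply Theorem~\ref{mainthm1} via the inequality $2^j>2^j-1\geq\operatorname{rad}(2^j-1)=\operatorname{rad}(c)$. The only cosmetic difference is that the paper takes $m=c-1$ and bounds $\operatorname{cosocle}(c-1)\geq 2^j$, whereas you take $m=2^{j+1}$ directly, which makes the cosocle computation $\operatorname{cosocle}(2^{j+1})=2^j$ immediate.
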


\begin{proof}
Observe that $\operatorname{rad}\!\left(  \left(  2^{j}-1\right)  ^{2}\right)
=\operatorname{rad}\!\left(  2^{j}-1\right)  \leq2^{j}-1$. Since $\left(
2^{j}-1\right)  ^{2}-1=2^{j+1}\left(  2^{j-1}-1\right)  $, we deduce that
\[
\operatorname{cosocle}\!\left(  \left(  2^{j}-1\right)  ^{2}-1\right)
=\frac{2^{j+1}\left(  2^{j-1}-1\right)  }{2\operatorname{rad}\!\left(
2^{j-1}-1\right)  }=\frac{2^{j}\left(  2^{j-1}-1\right)  }{\operatorname{rad}%
\!\left(  2^{j-1}-1\right)  }\geq2^{j}.
\]
The result now follows from Theorem~\ref{mainthm1}, since $\operatorname{cosocle}((  2^{j}-1)  ^{2}-1)
>\operatorname{rad}(  (  2^{j}-1)  ^{2})$.
\end{proof}

The $j=2$ and $j=3$ cases in Corollary~\ref{coronpowersof2} result in the sequences of $abc$ triples
$\left(  1,9^{k}-1,9^{k}\right)  $ and $\left(  1,49^{k}-1,49^{k}\right)  $,
respectively. Of note is that the proof of the corollary is made possible by the
lower bound, $\operatorname{cosocle}\!\left(  \left(  2^{j}-1\right)
^{2}-1\right)  \geq2^{j}$. This leads us to ask, can Corollary
\ref{coronpowersof2} be generalized to deduce sequences of $abc$ triples
$\left(  1,c-1,c\right)  $ with $\operatorname{cosocle}\!\left(  c-1\right)  $
bounded below by $n^{j}$ for some positive integer of the form $n^{j}$? The
answer is yes, but we have to take $c=\left(  n^{j}-1\right)  ^{k}$ for some
positive even integer $k$ that is divisible by $n$ to allow a similar argument
to that of Corollary~\ref{coronpowersof2} to work. This is shown below.

\begin{corollary}
\label{Cor3_9} Let $n\geq3$ and $j\geq1$ be integers. If $k$ is a positive
integer such that $nk$ is even, then $\left(  1,\left(  n^{j}-1\right)
^{nk}-1,\left(  n^{j}-1\right)  ^{nk}\right)  $ is an $abc$ triple.
\end{corollary}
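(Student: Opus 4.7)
Set $N = n^j - 1$ and $C = N^{nk}$. Since $\operatorname{rad}(C) = \operatorname{rad}(N) \leq n^j - 1$, Theorem~\ref{mainthm1} (applied with $c = C$ and exponent $1$) reduces the task to finding a divisor $m$ of $C - 1$ with $\operatorname{cosocle}(m) > \operatorname{rad}(N)$. My plan is to take $m = n^{j+1}$, whose cosocle equals $n^{j+1}/\operatorname{rad}(n) \geq n^j > n^j - 1 \geq \operatorname{rad}(N)$. Everything therefore reduces to proving the divisibility $n^{j+1} \mid N^{nk} - 1$.

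The starting identities are $N + 1 = n^j$ and $N^2 - 1 = n^j(n^j - 2)$; together with the hypothesis that $nk$ is even, they yield the factorization
\[
N^{nk} - 1 = (N^2 - 1) \sum_{i=0}^{nk/2 - 1} N^{2i} = n^j(n^j - 2) \cdot S,
\]
which already supplies $n^j$. To squeeze out one further factor of $n$, I will exploit that $N^2 \equiv 1 \pmod{n}$, forcing $S \equiv nk/2 \pmod{n}$. If $n$ is odd, then $nk$ even forces $k$ even, so $n \mid nk/2$ and hence $n \mid S$; the same conclusion holds when $n$ is even and $k$ is even. The delicate case is $n$ even with $k$ odd: then $nk/2 = (n/2)k$ is only divisible by $n/2$, so $S$ contributes the full $v_p(n)$ at each odd prime $p \mid n$ but only $v_2(n) - 1$ at $p = 2$. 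The missing factor of $2$ is furnished by the companion factor $n^j - 2$, which is even whenever $n$ is even. Combining these bounds prime by prime confirms $v_p(N^{nk} - 1) \geq (j+1)\,v_p(n)$ for every $p \mid n$, whence $n^{j+1} \mid N^{nk} - 1$ and we are done.

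The main obstacle is precisely the bookkeeping in the ``$n$ even, $k$ odd'' subcase, where the naive strategy of extracting the extra $n$ from $S$ falls one power of $2$ short and must be repaired using $n^j - 2$. An alternative, less elementary route would be to apply Lemma~\ref{lemmagran1} directly to $N^{nk} - 1$ and read off $v_p(N^{nk} - 1)$ prime by prime, but this trades the clean factorization argument above for a case split into the $N \equiv 1 \pmod{4}$ and $N \equiv 3 \pmod{4}$ branches at the prime $2$.
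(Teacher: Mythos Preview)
Your proof is correct. Both you and the paper reduce to showing $n^{j+1}\mid (n^{j}-1)^{nk}-1$ and then invoke Theorem~\ref{mainthm1}, but you reach that divisibility by a different route. The paper expands $(n^{j}-1)^{nk}$ directly with the binomial theorem: since $nk$ is even the constant term cancels the $-1$, the $l=1$ term is $-kn^{j+1}$, and every term with $l\ge 2$ carries a factor $n^{jl}$ with $jl\ge 2j\ge j+1$, so $n^{j+1}$ divides the whole expression in one stroke, with no case analysis. Your factorization $N^{nk}-1=n^{j}(n^{j}-2)S$ is equally valid but forces you to track the $2$-adic valuation separately in the ``$n$ even, $k$ odd'' subcase and then repair it using the extra factor of $2$ in $n^{j}-2$; the binomial-expansion approach sidesteps this entirely. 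Your route has the minor advantage of making the hypothesis ``$nk$ even'' visible at the very first step (it is what permits factoring through $N^{2}-1$), whereas in the paper's argument that hypothesis enters only to ensure $(-1)^{nk}=1$.
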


\begin{proof}
Observe that $\operatorname{rad}\!\left(  \left(  n^{j}-1\right)
^{nk}\right)  \leq n^{j}-1$ and%
\[
\left(  n^{j}-1\right)  ^{nk}-1=-1+\sum_{l=0}^{nk}\binom{nk}{l}n^{jl}\left(
-1\right)  ^{nk-l}=-kn^{j+1}+\sum_{l=2}^{nk}\binom{nk}{l}n^{jl}\left(
-1\right)  ^{nk-l}.
\]
Note that in the last expression, each term in the sum is divisible by
$n^{j+1}$.
From this, we deduce that $\operatorname{cosocle}\!\left(  \left(
n^{j}-1\right)  ^{nk}-1\right)  \geq n^{j}$. Hence $\operatorname{cosocle}%
\!\left(  \left(  n^{j}-1\right)  ^{nk}-1\right)  >\operatorname{rad}\!\left(
\left(  n^{j}-1\right)  ^{nk}\right)  $, and the result now follows by
Theorem~\ref{mainthm1}.
\end{proof}

As an illustration, consider $(n,j)=(3,1)$ and $k=2l$ for some positive integer $l$. This results in the sequence of
$abc$ triples $\left(  1,64^{l}-1,64^{l}\right)  $.

\begin{corollary}
\label{maincor3}Let $n$ be a positive even integer. Then $\left(
1,n^{\left(  n+1\right)  k},n^{\left(  n+1\right)  k}+1\right)  $ is an $abc$
triple for each positive odd integer $k$.
\end{corollary}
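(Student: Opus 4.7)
The plan is to apply Theorem~\ref{mainthm2} with $b=n^{n+1}$, so it suffices to exhibit a positive integer $m$ that divides $b+1=n^{n+1}+1$ and satisfies $\operatorname{cosocle}(m)>\operatorname{rad}(b)=\operatorname{rad}(n^{n+1})=\operatorname{rad}(n)$. The natural candidate, mirroring the argument used in Corollary~\ref{maincor2}, is $m=(n+1)^{2}$.

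Since $n$ is even, $n+1$ is odd, and hence we have the factorization
\[
n^{n+1}+1=(n+1)P,\qquad\text{where }P=\sum_{j=0}^{n}(-1)^{j}n^{j}.
\]
First I would verify that $(n+1)$ divides $P$. Working modulo $n+1$ we have $n\equiv -1$, so
\[
P\equiv\sum_{j=0}^{n}(-1)^{j}(-1)^{j}\ \operatorname{mod}(n+1)=\sum_{j=0}^{n}1\ \operatorname{mod}(n+1)=n+1\equiv0\ \operatorname{mod}(n+1).
\]
Thus $(n+1)^{2}$ divides $n^{n+1}+1$, so $m=(n+1)^{2}$ is a valid divisor of $b+1$.

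Next I would verify the cosocle inequality. Writing $n+1=\prod_{i}p_{i}^{e_{i}}$, we get $\operatorname{rad}((n+1)^{2})=\operatorname{rad}(n+1)=\prod_{i}p_{i}$, so
\[
\operatorname{cosocle}((n+1)^{2})=\frac{(n+1)^{2}}{\operatorname{rad}(n+1)}\geq\frac{(n+1)^{2}}{n+1}=n+1>n\geq\operatorname{rad}(n).
\]
With $b=n^{n+1}$ and $m=(n+1)^{2}$, the hypotheses of Theorem~\ref{mainthm2} are met, and the conclusion that $(1,n^{(n+1)k},n^{(n+1)k}+1)$ is an $abc$ triple for every positive odd integer $k$ follows immediately.

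There is no real obstacle; the only delicate point is noticing that the evenness of $n$ is used precisely to make $n+1$ odd, which is what allows the factorization $n^{n+1}+1=(n+1)P$, and then verifying divisibility of $P$ by $n+1$ via the substitution $n\equiv-1\pmod{n+1}$. Everything else is a direct invocation of Theorem~\ref{mainthm2}.
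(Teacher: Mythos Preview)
Your proof is correct and follows essentially the same route as the paper: both arguments hinge on showing that $(n+1)^{2}$ divides $n^{n+1}+1$ via the factorization $n^{n+1}+1=(n+1)P$ and the congruence $P\equiv 0\pmod{n+1}$, then invoking Theorem~\ref{mainthm2}. The only cosmetic difference is that the paper takes $m=n^{n+1}+1$ and bounds its cosocle below by $n+1$, whereas you take $m=(n+1)^{2}$ directly; the underlying computation is identical.
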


\begin{proof}
Observe that $n^{n+1}+1=\left(  n+1\right)  \sum_{j=0}^{n}\left(  -1\right)
^{j}n^{j}$. Since $n\equiv-1\ \operatorname{mod}\!\left(  n+1\right)  $, it
follows that%
\[
\sum_{j=0}^{n}\left(  -1\right)  ^{j}n^{j}\equiv\sum_{j=0}^{n}%
1\ \operatorname{mod}\!\left(  n+1\right)  =0\ \operatorname{mod}\!\left(
n+1\right).
\]
Hence, $\operatorname{rad}(n^{n+1}+1)=\operatorname{rad}\!\left(
\frac{n^{n+1}+1}{n+1}\right)  \leq\frac{n^{n+1}+1}{n+1}$. Consequently,%
\[
\operatorname{cosocle}(n^{n+1}+1)=\frac{n^{n+1}+1}{\operatorname{rad}%
(n^{n+1}+1)}\geq n+1>\operatorname{rad}\!\left(  n\right)  .
\]
The result now follows from Theorem \ref{mainthm2} by taking
$m=n^{n+1}+1$.
\end{proof}

As a demonstration of the corollary, take $n=22$. Then $\left(1,22^{23k},22^{23k}+1\right)$ is a sequence of $abc$ triples for each positive odd integer~$k$.

\begin{corollary}\label{cor3_11}
Let $n\geq3$ be an odd integer and let $j\geq1$ be an integer. Then 
 for each odd integer~$k$, $\left(
1,\left(  n^{j}-1\right)  ^{nk},\left(  n^{j}-1\right)  ^{nk}+1\right)  $ is
an $abc$ triple.
\end{corollary}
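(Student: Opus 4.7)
The plan is to mirror the proof of Corollary~\ref{Cor3_9} while invoking Theorem~\ref{mainthm2} in place of Theorem~\ref{mainthm1}. Concretely, I would set $b = (n^{j} - 1)^{n}$, so that $b^{k} = (n^{j} - 1)^{nk}$ for every positive integer $k$. Theorem~\ref{mainthm2} then produces the family $(1, b^{k}, b^{k} + 1)$ over all positive odd integers $k$, which is exactly the desired sequence. Hence it suffices to exhibit a positive integer $m$ with $m \mid b + 1$ and $\operatorname{cosocle}(m) > \operatorname{rad}(b)$.

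The natural candidate is $m = n^{j+1}$, and the first step is to show it divides $b + 1 = (n^{j} - 1)^{n} + 1$. Expanding by the Binomial Theorem,
\[
(n^{j} - 1)^{n} + 1 = 1 + \sum_{l=0}^{n} \binom{n}{l} n^{jl} (-1)^{n-l}.
\]
Since $n$ is odd, the $l = 0$ term equals $(-1)^{n} = -1$, which cancels the leading $+1$. The $l = 1$ term equals $n \cdot n^{j} = n^{j+1}$, and for $l \geq 2$ each summand is divisible by $n^{jl} \geq n^{2j}$, which is in turn divisible by $n^{j+1}$ because $j \geq 1$. Hence $n^{j+1} \mid b + 1$.

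Next, I verify the inequality required by Theorem~\ref{mainthm2}. Since $\operatorname{rad}(n^{j+1}) = \operatorname{rad}(n) \leq n$, we have
\[
\operatorname{cosocle}(n^{j+1}) = \frac{n^{j+1}}{\operatorname{rad}(n)} \geq n^{j},
\]
while $\operatorname{rad}(b) = \operatorname{rad}\!\left((n^{j} - 1)^{n}\right) = \operatorname{rad}(n^{j} - 1) \leq n^{j} - 1 < n^{j}$. Thus $\operatorname{cosocle}(m) > \operatorname{rad}(b)$, and Theorem~\ref{mainthm2} completes the proof.

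I do not anticipate any genuine obstacle; the argument is a direct structural analogue of Corollary~\ref{Cor3_9}. The two places where care is needed are, first, that $n$ odd is precisely what forces $(-1)^{n} = -1$ in the binomial expansion and thereby yields the cancellation needed for divisibility by $n^{j+1}$; and second, that the hypothesis ``$k$ odd'' is exactly what Theorem~\ref{mainthm2} requires to pass from the base $b$ to the whole family $b^{k}$, so no further parity bookkeeping is needed.
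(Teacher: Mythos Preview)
Your proof is correct and follows essentially the same approach as the paper: both use the Binomial Theorem (with $n$ odd) to show that $n^{j+1}$ divides $(n^{j}-1)^{n}+1$, then compare $n^{j}$ against $\operatorname{rad}(n^{j}-1)$ and invoke Theorem~\ref{mainthm2}. The only cosmetic difference is that you take $m=n^{j+1}$ directly, which makes the cosocle computation explicit, whereas the paper takes $m=(n^{j}-1)^{n}+1$ and deduces the bound $\operatorname{cosocle}(m)\geq n^{j}$ from the divisibility by $n^{j+1}$.
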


\begin{proof}
Observe that $\operatorname{rad}\!\left(  \left(  n^{j}-1\right)  ^{n}\right)
\leq n^{j}-1$ and%
\[
\left(  n^{j}-1\right)  ^{n}+1=1+\sum_{l=0}^{n}\binom{n}{l}n^{jl}\left(
-1\right)  ^{n-l}=n^{j+1}+\sum_{l=2}^{n}\binom{n}{l}n^{jl}\left(  -1\right)
^{n-l}.
\]
Note that in the last expression, each term in the sum is divisible by
$n^{j+1}$. From this, we conclude that $\operatorname{cosocle}\!\left(
\left(  n^{j}-1\right)  ^{n}+1\right)  \geq n^{j}$. Hence
$\operatorname{cosocle}\!\left(  \left(  n^{j}-1\right)  ^{n}+1\right)
>\operatorname{rad}\!\left(  \left(  n^{j}-1\right)  ^{n}\right)  $, and the
result now follows by Theorem \ref{mainthm2}.
\end{proof}

As an example, let $n=3$ and $j=1$. Then we get the sequence of $abc$ triples
$\left(  1,8^{k},8^{k}+1\right)  $ for each odd integer $k$. In particular, we
recover the $abc$ triple $\left(  1,8,9\right)  $ as a special case.

\section{\texorpdfstring{$abc$}{abc} triples of the form \texorpdfstring{$(1,c-1,c)$}{(1,c-1,c) } and the ABC@Home Project}\label{section4}
The ABC@Home project found that there are exactly $14\hspace{0.15em}482\hspace{0.15em}065$ $abc$
triples $\left(  a,b,c\right)  $ with $c<10^{18}$. The information found by
the ABC@Home project is available on Bart de Smit's webpage \cite{Smit}.
Given an $abc$ triple $\left(  a,b,c\right)  $, we define its \textit{quality}
to~be
\[
q\!\left(  a,b,c\right)  =\frac{\log c}{\log\operatorname{rad}(abc)}.
\]
By definition, we have that
an $abc$ triple $\left(  a,b,c\right)  $ satisfies $\operatorname{rad}\!\left(  abc\right)  <~c$, and thus $q\!\left(  a,b,c\right)
>1$. This gives us the following restatement of the $abc$ conjecture: For each $\epsilon>0$, there are finitely many
$abc$ triples $\left(  a,b,c\right)  $ with $q\!\left(  a,b,c\right)
>1+\epsilon$.

The $abc$ triple with the largest known quality is $\left(  2,3^{10}%
\cdot109,23^{5}\right)  $, which has a quality of approximately $1.6299$. In
fact, Baker's \cite{MR2107944} \textit{explicit }$abc$ \textit{conjecture}
asserts that there is no $abc$ triple $\left(  a,b,c\right)  $ with
$q\!\left(  a,b,c\right)  \geq \frac{7}{4}$. From this statement, Fermat's Last
Theorem for exponent $n>6$ easily follows. We note that the explicit
$abc$ conjecture and the $abc$ conjecture are not equivalent.

\begin{figure}[H]
    \centering
    \includegraphics[scale=.72]{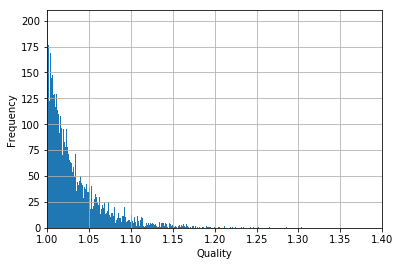}
    \caption{Histogram of the quality of $abc$ triples $(1,c-1,c)$ with~$c<~10^{18}$}
    \label{fig:my_label}
\end{figure}

Let $S\ $denote the set of $abc$ triples of the form $\left(  1,c-1,c\right)
$ with $c<10^{18}$. From the ABC@Home project, we have that $\#S=45\hspace{0.15em}603$.
The largest quality occurring in $S$ corresponds to the $abc$ triple $\left(
1,4374,4375\right)  $, which has quality approximately equal to $1.5679$. 
Figure~\ref{fig:my_label} summarize the distribution of the
quality of all $abc$ triples in $S$. The bin size in the histogram is set to $5000$. We note that all computations done in this section were done on SageMath~\cite{sagemath}, and our code is available on GitHub~\cite{githubprime}.

Table~\ref{tableS} lists the first fifteen $abc$ triples of the form $(1,c-1,c)$, their quality, and whether
they arise from one of the results proven in Section~\ref{section3}. 
The only $abc$ triple in the table that is not of the form $\left(
1,n^{l}-1,n^{l}\right)  $ or $\left(  1,n^{l},n^{l}+1\right)  $ for some
integer $l>1$ is $\left(  1,1215,1216\right)  $. However, most $abc$ triples
in $S$ are not of the aforementioned form. More precisely, $S$ contains a total of
$7376$ (resp. $1038$) $abc$ triples of the form $\left(  1,n^{l}%
-1,n^{l}\right)  $ (resp. $\left(  1,n^{l},n^{l}+1\right)  $) for some integer
$l>1$. We note that $\left(  1,8,9\right)  $ is the only double-counted
element since Mih\u{a}ilescu's Theorem \cite{MR2076124} (formerly known as
Catalan's conjecture) asserts that $2$ and $3$ are the only two consecutive
perfect powers. Consequently,%
\[
T=\left\{  \left(  1,c-1,c\right)  \in S\mid c=n^{l}\text{ or }c=n^{l}+1\text{
for some }l>1\right\}
\]
has $8413$ elements. 
The highest quality $abc$
triple in $T$ is $\left(  1,2400,2401\right)  $, with a quality of
approximately $1.4557$. Observe that this $abc$ triple is obtained from
Corollary~\ref{coronpowersof2} since $\left(  1,2400,2401\right)  =\left(
1,7^{4}-1,7^{4}\right)  $. 

{\renewcommand*{\arraystretch}{1.2}\begin{longtable}{cC{1.05in}c}
\hline
$\left(  1,c-1,c\right)  $ & $q\!\left(  1,c-1,c\right)  $ & Arises from
result in Section \ref{section3}?\\
\hline
\endfirsthead
\caption[]{\emph{continued}}\\
\hline
$\left(  1,c-1,c\right)  $ & $q\!\left(  1,c-1,c\right)  $ & Arises from
result in Section \ref{section3}?\\
\hline
\endhead
\hline
\multicolumn{3}{r}{\emph{continued on next page}}
\endfoot
\hline
\caption{The first fifteen $abc$ triples of the form $(1,c-1,c)$}
\endlastfoot
	
\hline
$\left(  1,8,9\right)  $ & $1.2263$ & \multicolumn{1}{l}{Yes; Corollary
\ref{maincor1} with $\left(  n,k\right)  =\left(  3,1\right)  $}\\\hline
$\left(  1,48,49\right)  $ & $1.0412$ & \multicolumn{1}{l}{Yes; Corollary
\ref{coronpowersof2} with $\left(  j,k\right)  =\left(  3,1\right)  $}\\\hline
$\left(  1,63,64\right)  $ & $1.1127$ & \multicolumn{1}{l}{Yes; Corollary
\ref{Cor3_9} with $\left(  n,j,k\right)  =\left(  3,1,1\right)  $}\\\hline
$\left(  1,80,81\right)  $ & $1.2920$ & \multicolumn{1}{l}{Yes; Corollary
\ref{maincor2} with $\left(  n,k\right)  =\left(  3,1\right)  $}\\\hline
$\left(  1,224,225\right)  $ & $1.0129$ & \multicolumn{1}{l}{Yes; Corollary
\ref{coronpowersof2} with $\left(  j,k\right)  =\left(  4,1\right)  $}\\\hline
$\left(  1,242,243\right)  $ & $1.3111$ & \multicolumn{1}{l}{No}\\\hline
$\left(  1,288,289\right)  $ & $1.2252$ & \multicolumn{1}{l}{No}\\\hline
$\left(  1,512,513\right)  $ & $1.3176$ & \multicolumn{1}{l}{Yes; Corollary
\ref{cor3_11} with $\left(  n,j,k\right)  =\left(  3,1,2\right)  $}\\\hline
$\left(  1,624,625\right)  $ & $1.0790$ & \multicolumn{1}{l}{Yes; Corollary
\ref{maincor1} with $\left(  n,k\right)  =\left(  5,1\right)  $}\\\hline
$\left(  1,675,676\right)  $ & $1.0922$ & \multicolumn{1}{l}{No}\\\hline
$\left(  1,728,729\right)  $ & $1.0459$ & \multicolumn{1}{l}{Yes; Corollary
\ref{maincor1} with $\left(  n,k\right)  =\left(  3,3\right)  $}\\\hline
$\left(  1,960,961\right)  $ & $1.0048$ & \multicolumn{1}{l}{Yes; Corollary
\ref{coronpowersof2} with $\left(  j,k\right)  =\left(  5,1\right)  $}\\\hline
$\left(  1,1024,1025\right)  $ & $1.1523$ & \multicolumn{1}{l}{Yes; Corollary
\ref{maincor3} with $\left(  n,k\right)  =\left(  4,1\right)  $}\\\hline
$\left(  1,1215,1216\right)  $ & $1.1194$ & \multicolumn{1}{l}{No}\\\hline
$\left(  1,2303,2304\right)  $ & $1.0204$ & \multicolumn{1}{l}{No}
\label{tableS}
\end{longtable}}

Now suppose that $\left(  1,n^{l}-1,n^{l}\right)  $ is an $abc$ triple for
some integer $l>1$. By Proposition \ref{abcprop}, we know that
$\operatorname{cosocle}\!\left(  n^{l}-1\right)  >\operatorname{rad}\!\left(
n\right)  $. However, checking that $\left(  1,n^{l}-1,n^{l}\right)  $ is an
$abc$ triple via this criteria gets more difficult as $n^{l}$ grows. By
Theorem \ref{mainthm1}, we can deduce that $\left(  1,n^{l}-1,n^{l}\right)  $
is an $abc$ triple if there is a divisor $m$ of $n^{l}-1$ such that
$\operatorname{cosocle}\!\left(  m\right)  >\operatorname{rad}\!\left(
n\right)  $. By considering those elements in $T$ of the form $\left(
1,n^{l}-1,n^{l}\right)  $ for some integer $l>1$, we find that $m$ can be
taken to be a proper divisor of $n^{l}-1$, except for the $abc$ triples
$\left(  1,c-1,c\right)  $ where $c\in\left\{
9,676,11309769,17380816062160329\right\}  $. Indeed, $\operatorname{rad}%
\!\left(  676\right)  =26$ and $675=3^{3}5^{2}$. The only divisor of $675$
satisfying $\operatorname{cosocle}\!\left(  m\right)  >26$ is $m=675$. 

The above leads us to ask: given $\left(  1,n^{l}-1,n^{l}\right)  \in T$ with $l>1$ an
integer, what is the least divisor~$m$ of $n^{l}-1$ for which
$\operatorname{cosocle}\!\left(  m\right)  >\operatorname{rad}\!\left(
n\right)  $? Using SageMath \cite{sagemath}, we answered this question, and
our datafile can be accessed in \cite[triples\_for\_thm1.csv]{githubprime}. Table~\ref{tableT1} gives the
first fifteen elements $\left(  a,b,c\right)  $ in $T$ of the form $\left(
1,n^{l}-1,n^{l}\right)  $, where $n$ and $l$ are listed, as well as the least
divisor $m$ of $n^{l}-1$ for which $\operatorname{cosocle}\!\left(  m\right)
>\operatorname{rad}\!\left(  n\right)  $ holds. The quality of the $abc$
triple is also given.

{\renewcommand*{\arraystretch}{1.2}\begin{longtable}{C{1in}C{.25in}C{.25in}C{.25in}C{.75in}}
\hline
$\left(  a,b,c\right)  $ & $n$ & $l$ & $m$ & $q\!\left(  a,b,c\right)$\\
\hline
\endfirsthead
\caption[]{\emph{continued}}\\
\hline
$\left(  a,b,c\right)  $ & $n$ & $l$ & $m$ & $q\!\left(  a,b,c\right)$\\
\hline
\endhead
\hline
\multicolumn{3}{r}{\emph{continued on next page}}
\endfoot
\hline
\caption{The first fifteen $abc$ triples $\left(a,b,c\right)  $ of the form $\left(1,n^{l}-1,n^{l}\right)  $ for $l>1$, with $m$ the least divisor of $n^l-1$ satisfying $\operatorname{cosocle}\!\left(m  \right)>\operatorname{rad}\!\left(  n\right)  $}
\endlastfoot
	
\hline
$\left(  1,8,9\right)  $ & $3$ & $2$ & $8$ & $1.2263$\\\hline
$\left(  1,48,49\right)  $ & $7$ & $2$ & $16$ & $1.0412$\\\hline
$\left(  1,63,64\right)  $ & $2$ & $6$ & $9$ & $1.1127$\\\hline
$\left(  1,80,81\right)  $ & $3$ & $4$ & $8$ & $1.2920$\\\hline
$\left(  1,224,225\right)  $ & $15$ & $2$ & $32$ & $1.0129$\\\hline
$\left(  1,242,243\right)  $ & $3$ & $5$ & $121$ & $1.3111$\\\hline
$\left(  1,288,289\right)  $ & $17$ & $2$ & $144$ & $1.2252$\\\hline
$\left(  1,624,625\right)  $ & $5$ & $4$ & $16$ & $1.0790$\\\hline
$\left(  1,675,676\right)  $ & $26$ & $2$ & $675$ & $1.0922$\\\hline
$\left(  1,728,729\right)  $ & $3$ & $6$ & $8$ & $1.0459$\\\hline
$\left(  1,960,961\right)  $ & $31$ & $2$ & $64$ & $1.0048$\\\hline
$\left(  1,2303,2304\right)  $ & $48$ & $2$ & $49$ & $1.0204$\\\hline
$\left(  1,2400,2401\right)  $ & $7$ & $4$ & $16$ & $1.4557$\\\hline
$\left(  1,3024,3025\right)  $ & $55$ & $2$ & $432$ & $1.0348$\\\hline
$\left(  1,3968,3969\right)  $ & $63$ & $2$ & $64$ & $1.1554$
\label{tableT1}
\end{longtable}}

Similarly, we ask the same question in the setting of Theorem \ref{mainthm2}.
That is, given $\left(  1,n^{l},n^{l}+1\right)  \in T$ with $l>1$ an odd
integer, what is the least positive divisor $m$ of $n^{l}+1$ for which
$\operatorname{cosocle}\!\left(  m\right)  >\operatorname{rad}\!\left(
n\right)  $? We note that $T$ has $596$ elements of the form $\left(
1,n^{l},n^{l}+1\right)  $ for some integer $l>1$. We also answer this question
through SageMath, and our datafile is found in \cite[triples\_for\_thm2.csv]{githubprime}. Table~\ref{tableT2} gives the first fifteen elements $\left(  a,b,c\right)  $ in $T$ of the
form $\left(  1,n^{l},n^{l}+1\right)  $, where $n$ and $l$ are listed, as well
as the least divisor $m$ of $n^{l}+1$ for which $\operatorname{cosocle}\!\left(
m\right)  >\operatorname{rad}\!\left(  n\right)  $ holds. 
In particular, we find that $(1,8,9)$ is the only $abc$ triple of the form $(1,n^l,n^l+1)$ in $T$ with $l>1$ an odd integer for which there is no proper divisor $m$ of $n^l+1$ satisfying $\operatorname{cosocle}\!\left(  m\right)  >\operatorname{rad}\!\left(
n\right)  $.

{\renewcommand*{\arraystretch}{1.2}\begin{longtable}{C{1.75in}C{.25in}C{.25in}C{.25in}C{.75in}}
\hline
$\left(  a,b,c\right)  $ & $n$ & $l$ & $m$ & $q\!\left(  a,b,c\right)$\\
\hline
\endfirsthead
\caption[]{\emph{continued}}\\
\hline
$\left(  a,b,c\right)  $ & $n$ & $l$ & $m$ & $q\!\left(  a,b,c\right)$\\
\hline
\endhead
\hline
\multicolumn{3}{r}{\emph{continued on next page}}
\endfoot
\hline
\caption{The first fifteen $abc$ triples $\left(a,b,c\right)  $ of the form $\left(1,n^{l},n^{l}+1\right)  $ for $l>1$ an odd integer, with $m$ the least divisor of $n^l+1$ satisfying $\operatorname{cosocle}\!\left(m  \right)>\operatorname{rad}\!\left(  n\right)  $}
\endlastfoot
	
\hline
$\left(  1,8,9\right)  $ & $2$ & $3$ & $9$ & $1.2263$\\\hline
$\left(  1,512,513\right)  $ & $2$ & $9$ & $9$ & $1.3176$\\\hline
$\left(  1,6859,6860\right)  $ & $19$ & $3$ & $343$ & $1.2281$\\\hline
$\left(  1,12167,12168\right)  $ & $23$ & $3$ & $676$ & $1.2555$\\\hline
$\left(  1,17576,17577\right)  $ & $26$ & $3$ & $81$ & $1.0039$\\\hline
$\left(  1,29791,29792\right)  $ & $31$ & $3$ & $784$ & $1.1424$\\\hline
$\left(  1,32768,32769\right)  $ & $2$ & $15$ & $9$ & $1.0406$\\\hline
$\left(  1,110592,110593\right)  $ & $48$ & $3$ & $49$ & $1.0135$\\\hline
$\left(  1,250047,250048\right)  $ & $63$ & $3$ & $64$ & $1.0351$\\\hline
$\left(  1,279936,279937\right)  $ & $6$ & $7$ & $49$ & $1.0124$\\\hline
$\left(  1,512000,512001\right)  $ & $80$ & $3$ & $81$ & $1.4433$\\\hline
$\left(  1,1953125,1953126\right)  $ & $5$ & $9$ & $27$ & $1.0423$\\\hline
$\left(  1,2097152,2097153\right)  $ & $2$ & $21$ & $9$ & $1.0287$\\\hline
$\left(  1,3176523,3176524\right)  $ & $147$ & $3$ & $676$ & $1.0145$\\\hline
$\left(  1,7077888,7077889\right)  $ & $192$ & $3$ & $169$ & $1.0515$
\label{tableT2}
\end{longtable}}

Next, we investigate how many elements of $T$ arise from the results proven in
Section~\ref{section3}. Indeed, each $abc$ triple produced by the results of
that section are of the form $\left(  1,n^{l}-1,n^{l}\right)  $ or $\left(
1,n^{l},n^{l}+1\right)  $ for some integer $l>1$. Moreover, for each $abc$
triple obtained from one of our corollaries in Section~\ref{section3}, we apply
the following result from \cite[Section~2.3]{Horst}.

\begin{proposition}
\label{transfer_method}Let $\left(  1,c-1,c\right)  $ be an $abc$ triple. Then
the following are $abc$ triples:
\[
\left(  1,\left(  c-1\right)  ^{3},c\left(  c^{2}-3c+3\right)  \right)
\qquad\text{and}\qquad\left(  1,c\left(  c-2\right)  ,\left(  c-1\right)
^{2}\right)  .
\]

\end{proposition}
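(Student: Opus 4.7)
The plan is a routine verification: for each of the two proposed triples, I need to check (i) the additive identity $a+b=c'$, (ii) the pairwise coprimality of the entries, and (iii) the radical inequality $\operatorname{rad}(abc') < c'$. The hypothesis that $(1,c-1,c)$ is an $abc$ triple will enter only in step (iii), via the inequality $\operatorname{rad}(c(c-1)) = \operatorname{rad}(c)\operatorname{rad}(c-1) < c$ (equality by Lemma~\ref{elemlemma}, strict inequality by Proposition~\ref{abcprop}).

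For step (i), direct algebra suffices: expanding the cube gives $1+(c-1)^3 = c^3-3c^2+3c = c(c^2-3c+3)$, and completing the square gives $1+c(c-2)=c^2-2c+1 = (c-1)^2$. Step (ii) is then automatic in both cases, since each triple has $a=1$ and so $\gcd(1,b)=\gcd(1,c')=1$, while $\gcd(b,c')=\gcd(c'-1,c')=1$ follows from $a+b=c'$ with $a=1$.

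For step (iii), I would apply Lemma~\ref{elemlemma}: $\operatorname{rad}(n^k) = \operatorname{rad}(n)$ and $\operatorname{rad}(xy) = \operatorname{rad}(x)\operatorname{rad}(y)$ when $\gcd(x,y)=1$. For the first triple, the coprimality of $(c-1)^3$ and $c(c^2-3c+3)$ established in (ii) gives
\[
\operatorname{rad}\!\left((c-1)^3 \cdot c(c^2-3c+3)\right) = \operatorname{rad}(c-1)\operatorname{rad}\!\left(c(c^2-3c+3)\right) \leq \operatorname{rad}(c-1)\operatorname{rad}(c)(c^2-3c+3),
\]
so the hypothesis $\operatorname{rad}(c-1)\operatorname{rad}(c)<c$ yields the strict bound $< c(c^2-3c+3)$. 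For the second triple, an entirely analogous calculation produces
\[
\operatorname{rad}\!\left((c-1)^2 \cdot c(c-2)\right) = \operatorname{rad}(c-1)\operatorname{rad}\!\left(c(c-2)\right) \leq \operatorname{rad}(c-1)\operatorname{rad}(c)(c-2) < c(c-2) = (c-1)^2-1 < (c-1)^2.
\]

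There is no real obstacle here; the only point requiring a little care is to use the coprimality from (ii) to factor the radical multiplicatively so the hypothesis can be applied cleanly. Both bounds ultimately descend from the single inequality $\operatorname{rad}(c(c-1))<c$, with the cubic case being algebraically a bit bulkier but conceptually identical to the quadratic one.
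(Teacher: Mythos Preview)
Your verification is correct. The paper does not actually supply its own proof of this proposition: it is quoted from \cite[Section~2.3]{Horst}, and the surrounding discussion only indicates that both identities arise by splitting the binomial expansion of $(a+b)^{n}$ for $n=2,3$ with $a=1$. Your direct check of the additive identity, the coprimality (which is automatic once $a=1$), and the radical bound is exactly the right approach and is self-contained.

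One small remark on step~(iii): the inequalities
\[
\operatorname{rad}\!\left(c(c^{2}-3c+3)\right)\leq\operatorname{rad}(c)\,(c^{2}-3c+3)
\qquad\text{and}\qquad
\operatorname{rad}\!\left(c(c-2)\right)\leq\operatorname{rad}(c)\,(c-2)
\]
are not literally instances of Lemma~\ref{elemlemma}, since $\gcd(c,c^{2}-3c+3)=\gcd(c,3)$ and $\gcd(c,c-2)=\gcd(c,2)$ need not equal~$1$. Of course the bound $\operatorname{rad}(xy)\leq\operatorname{rad}(x)\,y$ holds for \emph{all} positive integers $x,y$ (every prime dividing $xy$ divides $\operatorname{rad}(x)\cdot y$), so your argument goes through unchanged; it would just be worth saying this explicitly rather than pointing to Lemma~\ref{elemlemma}.
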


As a demonstration, the $abc$ triple $\left(  1,2303,2304\right)  $ is obtained
from the $abc$ triple $\left(  1,48,49\right)  $ since $2304=48^{2}$. In
particular, $\left(  1,2303,2304\right)  $ can now be viewed as a consequence
of Corollary~\ref{coronpowersof2} and Proposition~\ref{transfer_method}.
Proposition~\ref{transfer_method} is part of a more general result in
\cite[Section~2.3]{Horst}, which provides a way of mapping an $abc$ triple $\left(
a,b,c\right)  $ to a new $abc$ triple by applying polynomial identities. The
more general result arises by splitting the binomial formula $\left(
a+b\right)  ^{n}$ to obtain the following family of identities:%

\[
a^{n-k}\left(  \sum_{j=0}^{k}\binom{n}{j}a^{k-j}b^{j}\right)  +b^{k+1}\left(
\sum_{j=0}^{n-k-1}\binom{n}{j}a^{j}b^{n-k-1-j}\right)  =c^{n}.
\]
Taking $k=0$ yields Corollary~\ref{cortri}. Therefore, the two non-trivial
polynomial identities with $a=1$ are those occurring in
Proposition~\ref{transfer_method}.

Corollaries~\ref{coreulerphi1} through~\ref{cor3_11} provide us with a recipe for constructing $abc$ triples.
For each of these corollaries, we consider the set
\[
C_{i}=\left\{  \left(  1,c-1,c\right)  \in T\mid\left(  1,c-1,c\right)  \text{
is obtained from Corollary 3.i}\right\}  ,
\]
where $3\leq i\leq 12$. By Table \ref{tableS}, we see that $\left(
1,224,225\right)  \in C_{9}$, but $\left(  1,242,243\right)  \not \in C_{i}$
for each $i$. Using SageMath, we have the following table:
\[%
{\renewcommand*{\arraystretch}{1.2}\begin{tabular}
[c]{ccccccccccc}%
$i$ & $3$ & $4$ & $5$ & $6$ & $7$ & $8$ & $9$ & $10$ & $11$ & $12$\\\hline
$\#C_{i}$ & $32$ & $58$ & $12$ & $17$ & $41$ & $29$ & $81$ & $46$ & $18$ & $36$%
\end{tabular}.}
\]
The low number of $abc$ triples in $T$ occurring in each $C_{i}$ is expected.
Indeed, for Corollary~\ref{granville-tucker} to yield an $abc$ triple in $T$, we require
that $n>1$ be an integer, $p$ be an odd prime such that $p>\operatorname{rad}%
\!\left(  n\right)  $, and $n^{p\left(  p-1\right)  k}<10^{18}$ for some
integer $k$. For~$n$ an odd integer, the only possible $\left(  n,p,k\right)
$ is $\left(  3,5,1\right)  $, which gives the $abc$ triple $\left(
1,3486784400,3486784401\right)  $. We also note that since Corollary~\ref{granville-tucker} is a special case of Corollary~\ref{CarThm}, we have
that $C_{5}\subseteq C_{4}$. Now let%
\[
C=\bigcup_{3\leq i\leq12}C_{i}.
\]
We find that $\#C=164$. 

Lastly, let $D$ be the set of $abc$ triples in $T$
with the property that an element of $D$ is in $C$ or can be obtained from an $abc$ triple in $C$
after successive applications of Proposition~\ref{transfer_method} and Corollaries~\ref{cortri} and \ref{transferkodd}. As an
illustration, the $abc$ triple $\left(  1,12214672127,12214672128\right)$ is not in
$C$, but it is in $D$. To see this, recall that $\left(
1,2303,2304\right)  $ is obtained from the $abc$ triple $\left(
1,48,49\right)  $ via Proposition~\ref{transfer_method}. Then,%
\[
\left(  1,12214672127,12214672128\right)  =\left(  1,\left(  c-1\right)
^{3},c\left(  c^{2}-3c+3\right)  \right)  ,
\]
where $c=2304$, which shows that the $abc$ triple is in $D$. In fact, with the exception of the $abc$ triple $(1, 1215, 1216)$, every $abc$ triple appearing in Table~\ref{tableS} is in $D$. Using SageMath,
we find that $D$ has $311$ elements. 

We conclude this article by considering the percentage of $abc$ triples
$\left(  1,c-1,c\right)  $ in $S$ and $T$, that are also in $D$. More
precisely, for sets $X$ and $Y$ such that $X\subseteq Y\subseteq S$, we define%
\[
\delta_{X,Y}\!\left(  x\right)  =\frac{\#\left\{  \left(  1,c-1,c\right)  \in
X\mid c\leq x\right\}  }{\#\left\{  \left(  1,c-1,c\right)  \in Y\mid c\leq
x\right\}  }.
\]
In particular, $\delta_{X,Y}\!\left(  x\right)  $ gives the percentage of
$abc$ triples $\left(  1,c-1,c\right)  $ of $Y$ with $c\leq x$ that are in $X$. The table
below gives some values of $\delta_{T,S}\!\left(  x\right)  ,\delta
_{D,S}\!\left(  x\right)  $, and $\delta_{D,T}\!\left(  x\right)  $.
\[%
{\renewcommand*{\arraystretch}{1.2}\begin{tabular}
[c]{ccccccccc}%
$x$ & $10^{4}$ & $10^{6}$ & $10^{8}$ & $10^{10}$ & $10^{12}$ & $10^{14}$ &
$10^{16}$ & $10^{18}$\\\hline
$\delta_{T,S}\!\left(  x\right)  $ & $80\%$ & $57.8\%$ & $45.2\%$ & $35.1\%$ &
$30.0\%$ & $24.6\%$ & $20.9\%$ & $18.4\%$\\\hline
$\delta_{D,S}\!\left(  x\right)  $ & $53.3\%$ & $28.1\%$ & $13.5\%$ & $7.03\%$
& $3.79\%$ & $2.06\%$ & $1.14\%$ & $0.68\%$\\\hline
$\delta_{D,T}\!\left(  x\right)  $ & $66.7\%$ & $48.7\%$ & $29.9\%$ & $20.0\%$
& $12.6\%$ & $8.40\%$ & $5.47\%$ & $3.70\%$ \\\hline
\end{tabular}}
\]
In particular, we see that $D$ contains nearly half of the $abc$ triples
$\left(  1,c-1,c\right)  $ in $T$ with $c\leq10^{6}$. This aligns with our
earlier observation that with the exception of $\left(  1,1215,1216\right)  $,
each $abc$ triple in Table \ref{tableS} is in $D$. However, for $c\geq10^{8}$,
the percentage of $abc$ triples in $S$ that are in $T$ begins to decrease
rapidly which leads us to conclude that most $abc$ triples in $T$ and $S$ do
not fall into families such as those illustrated in Section \ref{section3}.

\vspace{1em}

\noindent {\bf Acknowledgements.} The authors would like to thank the National Science Foundation, Pomona College, Edray Goins, Renee Bell, Cory Colbert, Bianca Thompson, and the staff and students of the Pomona Research in Mathematics Experience (PRiME) for their support and camaraderie as this work was being undertaken. Research at PRiME was supported by the National Science Foundation award DMS-2113782. The authors also thank Andrew Granville for his comments and suggestions on an earlier preprint. In particular, we are grateful for his observations regarding those integers $k$ for which $(1,c^k - 1, c^k)$ is an $abc$ triple. This led to the deduction of Theorem~\ref{gransuggestion}. The authors are grateful to the referee for their comments and suggestions, and for catching a small error in the $p=2$ case of Lemma~\ref{lemmagran1} of the original submission. We also would like to thank the High Performance Computing Program team at California State University San Bernardino and the National Research Platform, especially Youngsu Kim, for providing us access to SageMath in said computing program. The High Performance Computing Program is supported in part by National Science Foundation awards CNS-1730158, ACI-1540112, ACI-1541349, OAC-1826967, OAC-2112167, CNS-2120019, the University of California Office of the President, and the University of California San Diego’s California Institute for Telecommunications and Information Technology/Qualcomm Institute. Thanks to CENIC for the 100Gbps networks.

\bibliographystyle{amsalpha}
\bibliography{bibliography}
\end{document}